\documentclass{amsart}
\pdfoutput=1

\usepackage{graphicx}
\usepackage{mathptmx}    
\usepackage{latexsym}
\usepackage{amsfonts}
\usepackage{amsmath}

\newtheorem{thm}{Theorem}[section]

\newtheorem{lem}[thm]{Lemma}

\theoremstyle{definition}
\newtheorem{defn}{Definition}[section]

\begin{document}
\title{A Problem of Erd\"{o}s Concerning Lattice Cubes}
\author{Chengcheng Yang}
\address{Department of Mathematics, Rice University, 6100 Main Street, Houston TX 77005d}
\email{cy2@rice.edu} 
\thanks{The author was support in part by NSF Award\#1745670}
\subjclass[2010]{05B35}
\email{cy2@rice.edu} 
\date{\today}
\keywords{geometric lattices, combinatorics, order}

\begin{abstract}
This paper studies a problem of Erd\"{o}s concerning lattice cubes. Given an $N \times N \times N$ lattice cube, we want to find the maximum number of vertices one can select so that no eight corners of a rectangular box are chosen simultaneously. Erd\"{o}s conjectured that it has a sharp upper bound, which is $O(N^{11/4})$, but no example that large has been found yet. We start approaching this question for small $N$ using the method of exhaustion, and we find that there is not necessarily a unique maximal set of vertices (counting all possible symmetries). Next, we study an equivalent two-dimensional version of this problem looking for patterns that might be useful for generalizing to the three-dimensional case. Since an $n \times n$ grid is also an $n \times n$ matrix, we rephrase and generalize the original question to: what is the minimum number $\alpha(k,n)$ of vertices one can put in an $n \times n$ matrix with entries 0 and 1, such that every $k \times k$ minor contains at least one entry of 1, for $1 \leq k \leq n$? We discover some interesting formulas and asymptotic patterns that shed new light on the question.
\end{abstract}
\maketitle

\section{Motivation}
In 2009, I learned this problem from Dr. Katz in my summer REU program at Indiana University. The information and ideas in this section came completely from him. After the motivation, come my ideas and efforts. Now let's start. 

In the paper {\it Multidimensional Van-der-Corput and sublevel set estimates} \cite{1}, in connection with estimates of Fourier integral operators, the following question was raised. Let 
\[Q = [0, 1]^n \subset \mathbb{R}^n, \] 
be the unit cube. Let $f$ be a $C^n$ function on $Q$ satisfying
\[\frac{\partial^n f}{\partial x_1 \partial x_2 \ldots \partial x_n } > \Lambda. (\Lambda > 0)  \]
What upper bound can be given on the measure of the set
\[S = \{x \in Q : |f(x)| < 1\}.\]
To solve this problem, we first notice that there is no large n-dimentional  box contained in $S$. 
More precisely, pick two different points in the $x_1$ direction, call them $x_{11}$ and $x_{12}$ with $x_{12} > x_{11}$; 
pick two different points in the $x_2$ direction, call them $x_{21}$ and $x_{22}$ with $x_{22} > x_{21}$; and so on. Then the set of points in the form 
$(x_{1i_1}, x_{2i_2}, \ldots, x_{ni_n})$ are the vertices of an n-dimensional box in $\mathbb{R}^n$, where $i_1, i_2, \ldots, i_n \in \{1,2\}$. 
There is a constant $C$ such that if the volume of this box is bigger than $C$, then $S$ does not contain all points of the box. 
For example, when $n=1$, by the fundamental theorem of calculus, \[
f(x_{12})=f(x_{11}) + \int_{x_{11}}^{x_{12}} \frac{\partial f}{\partial x_1} dx_1.\]
If $x_{12} - x_{11} >  \frac{2}{\Lambda}$ , then \[
2 > |f(x_{12}) - f(x_{11})| > (x_{12} - x_{11}) \Lambda,\] which is a contradiction. So $C = \frac{2}{\Lambda}$. In general, one can show that $C = \frac{2^n}{\Lambda}$ by Fubini's theorem. 

We can make a discrete analogue of the question, which helps to solve the orginal question. 
That is, given an $N \times N \times N$ grid, what is the maximum number of vertices that we can select which does not contain the eight corners of any rectangular box? First of all, we can apply the Cauchy-Schwarz inequality to get an upper bound. 

Let $A$ be a maximal set of vertices that satisfies the condition, and we define the following function: 
\[ I : \{ (i, j, k) | 1 \leq i, j, k \leq N\} \longrightarrow \{0, 1\}, \] where
 \[
I(i, j, k) = \left\{ \begin{array}{rcl}
1 & \mbox{ if } & (i, j, k) \in A, \\
0 & \mbox{ if } & (i, j, k) \notin A. 
\end{array} \right.
\]Here each vertex is indexed by $(i, j, k)$, and we also write $I(i, j, k)$ as $I _{ijk}$ for convenience. 

\begin{lem}
\label{lem:1}
The size of $A$ is at most $O(N^{\frac{11}{4}})$. 
\end{lem}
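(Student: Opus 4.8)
The plan is to slice the cube into $N$ parallel layers and combine a convexity estimate of K\H{o}v\'{a}ri--S\'{o}s--Tur\'{a}n type inside each layer with a global budget on ``combinatorial rectangles'' across layers. For $1 \leq k \leq N$ write $a_k = \sum_{1 \leq i,j \leq N} I_{ijk}$ for the number of chosen points in the $k$-th layer, so that $|A| = \sum_{k=1}^{N} a_k$. Call an unordered quadruple $(\{i_1,i_2\},\{j_1,j_2\})$ with $i_1 \neq i_2$ and $j_1 \neq j_2$ a \emph{combinatorial rectangle of layer $k$} if $I_{i_1 j_1 k} = I_{i_1 j_2 k} = I_{i_2 j_1 k} = I_{i_2 j_2 k} = 1$, and let $r_k$ be the number of such quadruples. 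The key observation is that the box-free hypothesis is \emph{equivalent} to the statement that no combinatorial rectangle occurs in two distinct layers: if $(\{i_1,i_2\},\{j_1,j_2\})$ were a rectangle of layers $k_1$ and $k_2$ with $k_1 \neq k_2$, then the eight vertices $(i_a, j_b, k_c)$ with $a,b,c \in \{1,2\}$ would all lie in $A$ and would form the corners of a rectangular box, and conversely any forbidden box yields such a shared rectangle. Hence the rectangle-sets of the $N$ layers are pairwise disjoint subsets of the family of all $\binom{N}{2}^{2}$ possible combinatorial rectangles, so
\[
\sum_{k=1}^{N} r_k \;\leq\; \binom{N}{2}^{2} \;\leq\; \frac{N^4}{4}.
\]

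Next I would bound each $r_k$ from below in terms of $a_k$ by using convexity of $t \mapsto \binom{t}{2}$ twice. Writing $d_i^{(k)}$ for the number of $1$'s in row $i$ of layer $k$ (so that $\sum_i d_i^{(k)} = a_k$), the number of incidences consisting of a row together with a pair of columns that are both $1$ in that row equals $\sum_i \binom{d_i^{(k)}}{2}$, which is at least $N\binom{a_k/N}{2}$ by Jensen's inequality. Distributing these incidences over the $\binom{N}{2}$ column-pairs and applying Jensen a second time, $r_k = \sum_{p}\binom{m_p^{(k)}}{2} \geq \binom{N}{2}\binom{\bar m_k}{2}$, where $m_p^{(k)}$ is the number of rows containing the column-pair $p$ and $\bar m_k = \big(\sum_i\binom{d_i^{(k)}}{2}\big)\big/\binom{N}{2}$. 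When $a_k \geq 2N^{3/2}$ one checks $\bar m_k \geq 2$, so each binomial coefficient appearing above is comparable to the square of its argument, and the chain of inequalities collapses to $r_k \geq c\,a_k^4/N^4$ for an absolute constant $c > 0$ (one may take $c = 1/64$). The layers with $a_k < 2N^{3/2}$ together contribute at most $N \cdot 2N^{3/2} = O(N^{5/2})$ to $|A|$, so they may be discarded.

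Combining $r_k \geq c\,a_k^4/N^4$ over the remaining layers with the global budget $\sum_k r_k \leq N^4/4$ gives $\sum_{k=1}^{N} a_k^4 = O(N^8)$, and then H\"{o}lder's inequality with exponents $\tfrac{4}{3}$ and $4$ yields
\[
|A| \;=\; \sum_{k=1}^{N} a_k \;\leq\; N^{3/4}\Big(\sum_{k=1}^{N} a_k^4\Big)^{1/4} \;=\; N^{3/4}\cdot O(N^2) \;=\; O\big(N^{11/4}\big),
\]
which is the asserted bound.

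I expect the heart of the argument --- and the only place where care is really needed --- to be the two-step convexity estimate for $r_k$: one must verify that replacing $\binom{m}{2}$ by a constant multiple of $m^2$ is legitimate, which is precisely why the layers with few chosen points must be peeled off and treated separately, and one must follow the absolute constants through the three successive applications of Jensen and H\"{o}lder so that none of them spoils the final exponent $11/4$. By contrast, the disjointness statement in the first step is immediate from the definition of a rectangular box, and the concluding H\"{o}lder step is routine.
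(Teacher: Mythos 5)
Your argument is correct, and it takes a genuinely different route from the paper. The paper works directly in the cube: it applies the Cauchy--Schwarz inequality twice to reduce $|A|$ to the quantity $\sum_{k_1\neq k_2}\sum_{i,j} I_{ijk_1}I_{ijk_2}$, observes that for each fixed pair of layers $k_1\neq k_2$ the set of $(i,j)$ common to both layers must be rectangle-free in two dimensions, and then invokes the $O(N^{3/2})$ bound for rectangle-free subsets of an $N\times N$ grid (itself proved by another Cauchy--Schwarz argument) once for each of the $N(N-1)$ pairs of layers. You instead count combinatorial rectangles inside each \emph{single} layer, use the box-free hypothesis only through the disjointness of the rectangle-sets across layers (giving the global budget $\sum_k r_k\leq\binom{N}{2}^2$), lower-bound each $r_k$ by the K\H{o}v\'{a}ri--S\'{o}s--Tur\'{a}n double application of Jensen, and finish with H\"{o}lder; your handling of the small layers ($a_k<2N^{3/2}$) and the constant $c=1/64$ both check out, as does the monotonicity needed to pass from the lower bound $\bar m_k\geq 2$ to a lower bound on $\binom{\bar m_k}{2}$. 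What each approach buys: the paper's version is modular, quoting the two-dimensional Zarankiewicz-type lemma as a black box applied pairwise, whereas yours is a single global double count that never needs the 2D lemma per pair, copes naturally with unequal layer sizes through H\"{o}lder rather than through the crude bound $N(N-1)\cdot\sqrt{2}N^{3/2}$, and keeps explicit constants throughout; both yield the same exponent $N^{11/4}$.
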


\begin{proof}
The proof uses the Cauchy-Schwarz inequality several times as shown below.
\[
\begin{array}{rcl}
|A| & = & \sum_{i} \sum_{j} \sum_{k} I_{ijk}   \vspace{0.3cm} \\
     & = & \sum_i (\sum_{j, k} I_{ijk}) \times 1  \vspace{0.3cm} \\
     & \leq & \{\sum_i [\sum_{j, k } I_{ijk}]^2\}^{\frac{1}{2}} \times [\sum_i 1^2]^{\frac{1}{2}}  \mbox{ (Cauchy-Schwarz inequality)} \vspace{0.3cm}\\
     & = & \{\sum_i[\sum_j (\sum_k I_{ijk}) \times 1]^2\}^{\frac{1}{2}} \times N^{\frac{1}{2}}  \vspace{0.3cm} \\
     & \leq & \{\sum_i[\sum_j(\sum_k I_{ijk})^2] \times N\} ^{\frac{1}{2}} \times N^{\frac{1}{2}}   \mbox { (Cauchy-Schwarz inequality)} \vspace{0.3cm}\\
     & = & N \{\sum_i[\sum_j(\sum_{k_1}I_{ijk_1})(\sum_{k_2}I_{ijk_2})]\}^{\frac{1}{2}} \vspace{0.3 cm} \\
     & = & N [\sum_{k_1 \neq k_2} (\sum_{i, j} I_{ijk_1} \times I_{ijk_2}) + \sum_{k_1 = k_2 = k} (\sum_{i, j} I_{ijk}^2)]^{\frac{1}{2}} \vspace{0.3cm} 
\end{array}
\]

First let's look at the second term in the bracket. Since $\sum_{i,j}I_{ijk}^2 \leq \sum_{i,j} 1 = N^2$ for each $k$, the second term is $\leq N^3$. 
Next let's look at the first term in the bracket. There are $N(N-1)$ pairs of $(k_1, k_2),$ where $k_1 \neq k_2$. For each such pair $(k_1, k_2)$, if 
we could find $i_1 \neq i_2$, $j_1 \neq j_2$ such that $I_{ijk_1} = I_{ijk_2} = 1$ for $i = i_1, i_2$ and $j = j_1, j_2$, then we get a rectangular box as shown in Figure~\ref{fig:1}.

\begin{figure}
\includegraphics[width = 5cm]{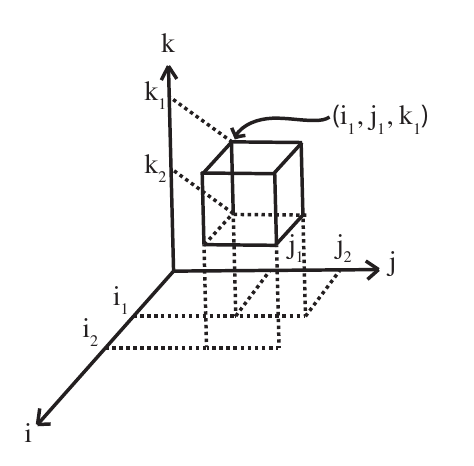}
\caption{When $I_{ijk} = 1$ for $i = i_1, i_2, j= j_1, j_2, k= k_1, k_2$, where $i_1 \neq i_2, j_1 \neq j_2, \text{ and } k_1 \neq k_2$, we get a rectangular box inside the grid}
\label{fig:1}
\end{figure}
This contradicts with our hypothesis that there is no rectangular box inside the grid whose eight vertices are contained in $A$. So for each $k_1 \neq k_2$, the maximum number of $(i, j)$ with $I_{ijk_1} = I_{ijk_2} =1$ is equal to the maximum number of vertices in an $N \times N$ grid which do not contain the four vertices of any rectangle. 

Let $B$ be a maximal set of vertices in an $N \times N$ grid that satisfies the condition, and let's define the following function:
\[ J : \{ (i, j) | 1 \leq i, j \leq N\} \longrightarrow \{0, 1\}, \] where
 \[
J(i, j) = \left\{ \begin{array}{rcl}
1 & \mbox{ if } & (i, j) \in B, \\
0 & \mbox{ if } & (i, j) \notin B. 
\end{array} \right.
\]Here each vertex is indexed by $(i, j)$, and we can use $J_{ij}$ as a short-hand notation for $J(i,j)$. 

Then we apply the Cauchy-Schwarz inequality again to obtain an upper bound for $B$. The calculation is shown as below:
\[
\begin{array}{rcl}
|B| & = & \sum_{i} \sum_{j} J_{ij}   \vspace{0.3cm} \\
     & = & \sum_i (\sum_{j} J_{ij}) \times 1  \vspace{0.3cm} \\
     & \leq & [\sum_i (\sum_{j } J_{ij})^2 ]^{\frac{1}{2}} \times [\sum_i 1^2]^{\frac{1}{2}}  \mbox{ (Cauchy-Schwarz inequality)} \vspace{0.3cm}\\
     & = & [\sum_i (\sum_{j_1}J_{ij_1})(\sum_{j_2}J_{ij_2})]^{\frac{1}{2}} \times N^{\frac{1}{2}} \vspace{0.3cm} \\
     & = & [\sum_i (\sum_{j_1 \neq j_2} J_{ij_1}I_{ij_2} + \sum_i \sum_{j = j_1 = j_2} J_{ij}^2)]^{\frac{1}{2}} \times N^{\frac{1}{2}} \vspace{0.3 cm} \\
     & = & [\sum_{j_1 \neq j_2} \sum_i J_{ij_1}J_{ij_2} + \sum_{j=j_1=j_2} \sum_i J_{ij}^2]^{\frac{1}{2}}\times N^{\frac{1}{2}} \vspace{0.3cm}
\end{array}
\]

Again let's first look at the second term in the bracket. Since $\sum_i J_{ij}^2 \leq N$ for each $j$, the second term is $\leq N^2$. Next let's analyze the first term. For each pair of $(j_1, j_2)$, with $j_1 \neq j_2$, we can have at most one $i$ such that $I_{ij_1} = I_{ij_2} =1$. Therefore, the first term is at most $N(N-1)$. Then we get the following estimate:
\[
|B|  \leq  [N(N-1) + N^2]^{\frac{1}{2}} \times N^{\frac{1}{2}} = [2N^2 - N]^{\frac{1}{2}} \times  N^{\frac{1}{2}} < \sqrt{2} N^{\frac{3}{2} }.
\]
Therefore $|B| = O(N^{\frac{3}{2}})$. 

Now we continue our estimate for the size of $A$ by using this result. Earlier we found that 
\[
|A| \leq N [\sum_{k_1 \neq k_2} (\sum_{i, j} I_{ijk_1} \times I_{ijk_2}) + \sum_{k_1 = k_2 = k} (\sum_{i, j} I_{ijk}^2)]^{\frac{1}{2}}, 
\]
and the second term is $\leq N^3$. Now we can bound the first term in the bracket by observing that for each pair $(k_1, k_2)$, with $k_1 \neq k_2$, the maximum number of $i, j$ such that $I_{ijk_1} = I_{ijk_2} = 1$ is $ <  \sqrt{2} N^{\frac{3}{2}}$, and so the first term is $ <  \sqrt{2} N^{\frac{3}{2} }\times N(N-1)$. Thus we conclude that $|A| < O(N^{\frac{11}{4}})$ as follows:
\[
|A| < N[\sqrt{2} N^{\frac{3}{2} }\times N(N-1) + N^3]^{\frac{1}{2}} < N[\sqrt{2}N^{\frac{7}{2}} +N^3]^{\frac{1}{2}}  < 2 N^{\frac{11}{4}}.
\]
\end{proof}

However, we don't know whether $N^{\frac{11}{4}}$ is a sharp upper bound of A. According to the paper {\it Remarks on the Box Problem} \cite{1}, Erd\"{o}s  conjectured it to be sharp and Katz and etc.\ \cite{2} have found an example that is $O(N^{11/4})$. So this paper attempts to approach this question from two points of view. First, we use the method of exhaustion to look at specific examples for small $N$, then we look at an equivalent two-dimensional version of the same question and hope to find an explicit formula.


\section{The discrete case} 
\subsection{Symmetries}
First let's look at the simplest example of $N=2$. When $N = 2$, we have a $2 \times 2 \times 2$ grid as shown in Figure~\ref{fig:2}. 

\begin{figure}[tb]
\includegraphics[width = 2cm]
{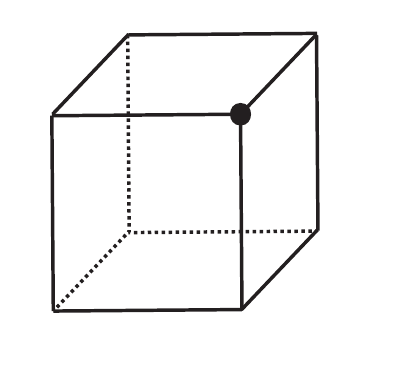}
\caption{We need to remove only one vertex from a $2 \times 2 \times 2 $ grid }
\label{fig:2}
\end{figure}

There is only one box inside the grid. so we need to remove just one vertex to get rid of the box. Therefore any set of seven vertices satisfies the condition that it does not contain the eight corners of any box inside the grid. 

\begin{defn}
A {\it configuration} is a maximal set of vertices in an $N \times N \times N$ grid such that it does not contain all the vertices of any rectangular box inside the grid. 
\end{defn}

\begin{defn}
The {\it order} of an $N \times N \times N$ grid is the order of a configuration for the $N \times N \times N$ grid. 
\end{defn}
As shown in our example above, the order of a $2 \times 2 \times 2$ grid is 7, and it has 8 different configurations. 

Although configurations are not necessarily unique, they can be obtained from one another through many symmetries. For example, given one configuration, we can rotate the grid and get another different configuration. In this way, we get many configurations that are essentially the same. In order to avoid this type of repetition, we study all possible symmetries that could arise, and hope that we could reach some kind of uniqueness in the end. To help us visualize the symmetries, a $3 \times 3 \times 3$ grid will serve as an example.

Let's begin with discussing the first type of symmetry. We observe that in a $3 \times 3 \times 3$ grid, there are nine two-dimensional $3 \times 3$ grids inside. More precisely, there are three two-dimensional $3 \times 3$ grids parallel to the $xy$-plane, three parallel to the $xz$-plane, and three parallel to the $yz$-plane. For example, a $3 \times 3$ grid parallel to the $xy$-plane is shown in Figure~\ref{fig:3}, which is highlighted in red. Furthermore, we give it a new name as follows. 

\begin{figure}
\label{fig:3}
\includegraphics[width = 5cm]{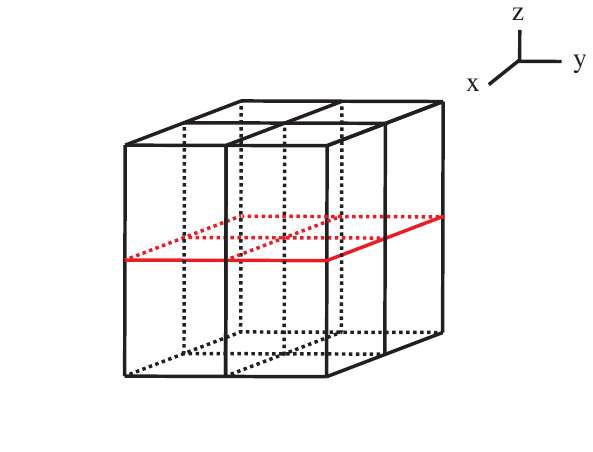}
\caption{An example of a layer in a $3 \times 3 \times 3$ grid.}
\end{figure}

\begin{defn}
A {\it layer} in an $N \times N \times N$ grid is a two-dimensional $N \times N$ grid. 
\end{defn}

There are $3N$ layers in an $N \times N \times N$ grid. It turns out that after we permute two different layers either from top to bottom, or from front to back, or from left to right, a configuration is sent to another configuration.  And this is our first type of symmetry. 

\begin{lem} 
\label{lem:2}
Given an arbitrary configuration, its image after any of the following three kinds of permutations is still a configuration (see Figure~\ref{fig:5}). 
\begin{enumerate}
\item permute any two layers from top to bottom;
\item permute any two layers from front to back;
\item permute any two layers from left to right.
\end{enumerate}
\end{lem}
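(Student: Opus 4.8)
The plan is to exhibit each of the three layer permutations as a bijection of the $N\times N\times N$ grid that acts coordinate-wise, and then to show that the defining property of a configuration --- ``contains no complete rectangular box,'' together with maximality --- is preserved by every such bijection. This reduces the whole lemma to a single structural observation.

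First I would record the shape of a box: the eight corners of a rectangular box form a product set $\{i_1,i_2\}\times\{j_1,j_2\}\times\{k_1,k_2\}$ with $i_1\ne i_2$, $j_1\ne j_2$, $k_1\ne k_2$, i.e. a product of three two-element subsets of $\{1,\dots,N\}$. Next I would note that permuting the two layers $k=a$ and $k=b$ (the top-to-bottom case) is the map $\sigma(i,j,k)=(i,j,\tau(k))$, where $\tau=(a\;b)$ is a transposition of $\{1,\dots,N\}$; the front-to-back and left-to-right permutations are the analogous maps transposing the first or the second coordinate instead. In every case $\sigma$ is a bijection of the vertex set, built from a transposition in one coordinate and the identity in the other two.

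The one claim needing a word of justification is that such a $\sigma$ sends boxes to boxes. For the top-to-bottom case, $\sigma\bigl(\{i_1,i_2\}\times\{j_1,j_2\}\times\{k_1,k_2\}\bigr)=\{i_1,i_2\}\times\{j_1,j_2\}\times\{\tau(k_1),\tau(k_2)\}$, and since $\tau$ is injective we still have $\tau(k_1)\ne\tau(k_2)$, so the image is again the corner set of a box; the same holds for $\sigma^{-1}$, and identically for the other two permutation types. With this in hand the lemma follows formally. If $A$ is a configuration and $\sigma(A)$ contained the eight corners of some box $\mathcal{B}$, then $A=\sigma^{-1}(\sigma(A))$ would contain the eight corners of the box $\sigma^{-1}(\mathcal{B})$, contradicting that $A$ is box-free; hence $\sigma(A)$ is box-free. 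If $\sigma(A)$ were not maximal, there would be a vertex $v\notin\sigma(A)$ with $\sigma(A)\cup\{v\}$ still box-free, and then $A\cup\{\sigma^{-1}(v)\}=\sigma^{-1}(\sigma(A)\cup\{v\})$ would be box-free with $\sigma^{-1}(v)\notin A$, contradicting maximality of $A$. So $\sigma(A)$ is a configuration, and the same argument applies verbatim to all three permutation types.

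I do not expect a serious obstacle: the entire content is that ``box-free'' is invariant under coordinate-wise bijections. The only place to be careful is the non-degeneracy in the definition of a box --- a layer transposition must not be permitted to collapse the two opposite faces of a box into a single face --- and this is exactly what injectivity of a transposition rules out, so I would make sure the definition of a rectangular box explicitly carries the conditions $i_1\ne i_2$, $j_1\ne j_2$, $k_1\ne k_2$ before running the argument.
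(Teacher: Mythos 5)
Your proposal is correct and follows essentially the same route as the paper: the key observation in both is that a layer transposition is an invertible map sending rectangular boxes to rectangular boxes, so box-freeness is preserved. You merely spell out the coordinate-wise formalization and the maximality check (via $\sigma^{-1}$), which the paper leaves implicit.
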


\begin{figure}[tb]
\includegraphics[width = 10cm]
{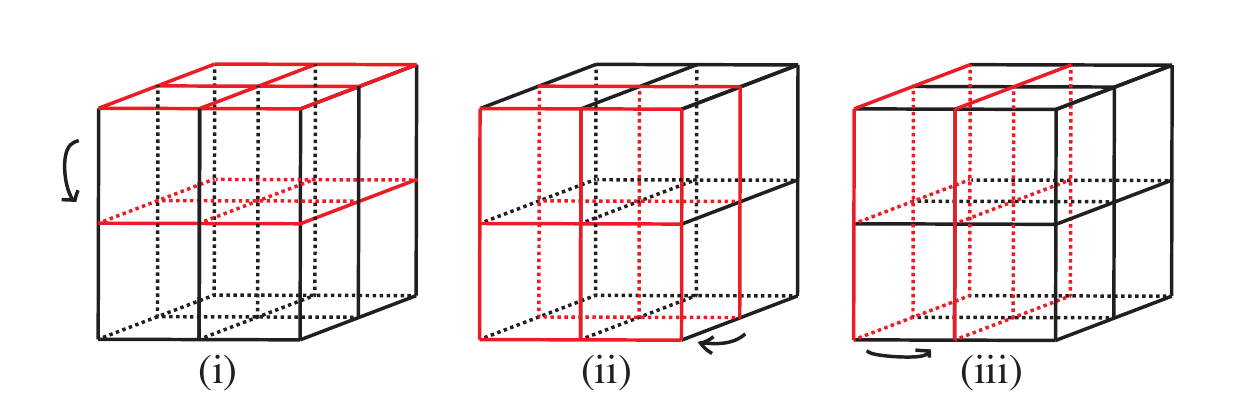}
\caption{When $N = 3$, we can permute any two layers (in red): (i) from top to bottom; (ii) from front to back; (iii) from left to right.}
\label{fig:5}
\end{figure}

\begin{proof} 
The proof is based on the observation that any of these permutations sends one rectangular box to another (possibly of a different size). Therefore if a set of vertices does not contain the eight corners of any box, then the new set of vertices after a permutation still does not contain all the corners of any box. 
\end{proof}

For $N = 2$, we see that all configurations can be obtained from one another by permuting the appropriate layers. Therefore we say the $2 \times 2 \times 2$ grid has a unique configuration (up to permutation). However, besides the permutations in Lemma~\ref{lem:2}, there are also other symmetries that could greatly simplify things. Now let's go to the second type of symmetry that we mentioned earlier. 

\begin{lem}
\label{lem:3}
When we view the grid as a cube, any of the 24 rotational symmetries of the cube sends a configuration to another. 
\end{lem}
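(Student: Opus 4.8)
The plan is to reduce Lemma~\ref{lem:3} to Lemma~\ref{lem:2} by observing that the rotational symmetry group of the cube is generated by very few elements, and that each generator is built from operations whose effect on configurations we can already control. Concretely, I would first recall that the rotation group of the cube has order $24$ and is generated by two rotations: a $90^\circ$ rotation about an axis through the centers of two opposite faces, and a $120^\circ$ rotation about a body diagonal. So it suffices to show that each of these two generators sends a configuration to a configuration; the general case then follows by composing, since a composition of configuration-preserving maps is configuration-preserving.

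Next I would handle the face-axis $90^\circ$ rotation. Fix, say, the axis through the centers of the top and bottom faces. This rotation permutes the $N$ vertical layers among themselves cyclically (indeed it fixes each horizontal layer setwise), and within each horizontal layer it acts as a $90^\circ$ rotation of the $N\times N$ grid, i.e.\ the map $(i,j)\mapsto(j,\,N+1-i)$. The key point, exactly as in the proof of Lemma~\ref{lem:2}, is that this map carries any axis-aligned rectangular box to an axis-aligned rectangular box: a box is determined by a pair of $x$-coordinates, a pair of $y$-coordinates, and a pair of $z$-coordinates, and the rotation merely permutes these coordinate data (swapping and reflecting the $x$- and $y$-slots, leaving the $z$-slot alone). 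Hence the eight corners of a box are sent to the eight corners of a box, so a vertex set containing no full box is sent to a vertex set containing no full box; maximality is preserved because the rotation is a bijection of the vertex set. The body-diagonal $120^\circ$ rotation is handled the same way: it cyclically permutes the three coordinate axes, $(i,j,k)\mapsto(j,k,i)$, which again permutes the defining coordinate pairs of any box among the three slots and therefore sends boxes to boxes.

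Finally I would assemble these pieces: every element of the $24$-element group is a word in the two generators above, each generator preserves the property ``contains no complete rectangular box'' and is a bijection on vertices, so every rotation preserves configurations. The main obstacle here is essentially bookkeeping rather than mathematics: one must be careful to verify that the chosen generators really do generate the full rotation group (or, alternatively, list a slightly larger but clearly sufficient set of generating rotations — the three $90^\circ$ rotations about the three face-axes already suffice and are visually transparent), and to write the coordinate formulas for each generator cleanly enough that the ``boxes go to boxes'' claim is manifest. No delicate estimate is needed; the whole argument is the same one-line observation as Lemma~\ref{lem:2}, applied to a generating set of the rotation group.
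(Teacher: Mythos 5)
Your proposal is correct, and it takes a somewhat different route from the paper. The paper's proof simply enumerates the $24$ rotations by conjugacy type (identity, the nine face-axis rotations, the six edge-axis rotations, the eight vertex-axis rotations) and asserts that one can check each sends a rectangular box to a rectangular box, then concludes via invertibility. You instead reduce to a generating set of the rotation group (a $90^\circ$ face rotation and a $120^\circ$ body-diagonal rotation), give explicit coordinate formulas $(i,j,k)\mapsto(j,\,N+1-i,\,k)$ and $(i,j,k)\mapsto(j,k,i)$, and observe that each merely permutes (and possibly reflects) the three pairs of coordinates defining a box, so boxes go to boxes; the full group is then handled by composition. This buys a genuinely checkable argument in place of the paper's ``check one by one,'' at the modest cost of verifying that your two rotations generate the whole group (or, as you note, using the three face-axis quarter-turns, which visibly suffice). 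Both proofs rest on the same two pillars: boxes map to boxes, and the map is invertible so the box-free property is preserved in both directions, which also gives preservation of maximality. One small imprecision: your parenthetical that the vertical-axis quarter-turn ``permutes the $N$ vertical layers cyclically'' is not quite right --- it interchanges the $X$-layers with the $Y$-layers in orbits of length four (or shorter for middle layers) --- but this remark plays no role in the argument, since the coordinate formula you give is what actually carries the proof.
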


\begin{proof} 
First, let's list all of the 24 rotational symmetries of a cube. They are the identity, the nine rotations whose axes pass through the centers of faces, the 6 rotations whose axes pass through the centers of edges, and the 8 rotations whose axes pass through the vertices. 
Second, we can check one by one that each rotation sends a rectangular box to another. 
Third, since a rotation is invertible, if a set of vertices does not contain all the vertices of any box inside the grid, after applying a rotation, the new set of vertices still doesn't contain any box's eight vertices. 
Thus a configuration is sent to another. 
\end{proof}

The third type of symmetry is the reflectional symmetry. 
\begin{lem}
\label{lem:4}
There are 10 reflectional symmetries of a cube that sends a configuration to another. 
\end{lem}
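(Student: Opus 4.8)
The plan is to follow the same template as the proofs of Lemma~\ref{lem:2} and Lemma~\ref{lem:3}: list the ten reflectional symmetries explicitly as bijections of the lattice $\{(x,y,z) : 1 \le x,y,z \le N\}$, check that each one permutes the set of axis-aligned rectangular boxes inside the grid, and then use invertibility to conclude that configurations are carried to configurations.

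\textbf{Step 1: enumerate the ten reflections.} Writing a grid point as $(x,y,z)$, I would take the three reflections in planes parallel to the faces, $(x,y,z)\mapsto(N+1-x,y,z)$ and its two cyclic analogues; the three diagonal reflections that are coordinate transpositions, e.g.\ $(x,y,z)\mapsto(y,x,z)$; the three further diagonal reflections $(x,y,z)\mapsto(N+1-y,\,N+1-x,\,z)$ and cyclic analogues; and finally the central inversion $(x,y,z)\mapsto(N+1-x,\,N+1-y,\,N+1-z)$, the ``point reflection'' through the center. These are the $9$ mirror symmetries together with the central inversion, and each is an orientation-reversing involution of the cube and a bijection of the grid onto itself.

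\textbf{Step 2: boxes go to boxes.} A rectangular box in the grid is precisely a product set $\{a_1,a_2\}\times\{b_1,b_2\}\times\{c_1,c_2\}$ with $a_1\ne a_2$, $b_1\ne b_2$, $c_1\ne c_2$. A face-parallel reflection replaces one two-element factor by another two-element factor in the same coordinate; a coordinate transposition (or its reflected version) permutes two of the three factors, possibly after reflecting them; the central inversion reflects all three factors. In every case the image is again a product of three two-element sets --- a bijection cannot collapse a two-element set --- hence again a rectangular box, possibly of different dimensions. So each of the ten maps permutes the collection of rectangular boxes inside the grid.

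\textbf{Step 3: configurations go to configurations, and the obstacle.} If $\sigma$ is one of these ten maps, then for any vertex set $A$ the image $\sigma(A)$ contains the eight corners of a box if and only if $A$ contains the eight corners of the preimage box; thus $A$ avoids all boxes iff $\sigma(A)$ does. Since $\sigma$ is a bijection it respects inclusion, so $A$ is maximal with this property iff $\sigma(A)$ is; hence $\sigma$ maps configurations to configurations. I do not anticipate a genuine obstacle: the argument is essentially the box-preservation observation already used twice. The only points requiring care are the bookkeeping that there are exactly ten such reflections, and the fact that the diagonal reflections must be composed with a face-parallel reflection (as in $(x,y,z)\mapsto(N+1-y,N+1-x,z)$) so that they map the grid $\{1,\dots,N\}^3$ onto itself rather than merely $\mathbb{R}^3$.
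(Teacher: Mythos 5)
Your argument is sound and follows the same template as the paper's proof (enumerate the maps, check that boxes go to boxes, invoke invertibility), but your enumeration of the ten maps differs from the paper's in one element, and it is worth being aware of this. The paper obtains its count by lifting the four reflections of an $N \times N$ grid from each of the three viewing directions (top, front, right), observing that the three face-parallel reflections are each counted twice, arriving at $9$ nontrivial mirror reflections, and then \emph{adding the identity map} as the tenth "reflection." You instead list the same $9$ mirror reflections in explicit coordinates and complete the count with the \emph{central inversion} $(x,y,z)\mapsto(N+1-x,N+1-y,N+1-z)$, which is orientation-reversing but is not a reflection in a plane. So the two sets of ten maps share the nine genuine mirrors and differ in the tenth. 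Nothing is at stake downstream: both the identity and the central inversion manifestly preserve configurations, and the group $G$ generated in the subsequent theorem is the same under either convention, since $-I$ is the composite of a $180^\circ$ rotation with a face-parallel mirror, both already available. Your explicit coordinate formulas, and the observation that a box is exactly a product $\{a_1,a_2\}\times\{b_1,b_2\}\times\{c_1,c_2\}$ of two-element sets which any of these bijections carries to another such product, make the "boxes go to boxes" step more airtight than the paper's check; the paper's derivation, on the other hand, explains where the otherwise odd-looking number $10$ comes from ($3\times 4$ lifted 2D reflections minus $3$ duplicates plus the identity).
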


\begin{proof}
First, we consider reflections of an $N \times N$ grid. There are four reflections in total: two with respect to the diagonals, and two with respect to the middle lines. Each reflection sends a rectangle to another as grid lines are sent to grid lines.
Next, we look at the $N \times N \times N$ grid from the top, we see an $N \times N$ grid. It follows that the four reflections of the $N \times N$ grid give rise to four reflectional symmetries of the $N \times N \times N$ grid. This is because boxes are mapped to boxes. 
Then, we can also look from the front or the right. In total these account for the 9 nontrivial reflectional symmetries of the $N \times N \times N$ grid. Adding the identity map gives us the 10 reflections in total. Therefore all the reflectional symmetries send rectangular boxes to rectangular boxes. 
Last, since a reflection is invertible, it sends a configuration to another. 
\end{proof}

Let $G$ be the group generated by the three types of symmetries shown above, and we wonder whether $G$ includes all possible `maps' that could send one configuration to another. First of all, we need to make sense of the `maps' here. Suppose $f: \{\text{vertices}\} \rightarrow \{\text{vertices}\}$ is any map that sends one configuration to another, $f$ needs to send a rectangular box to another rectangular box. Therefore, we want $f$ to be a bijective map between the vertices such that if eight vertices determine a box in the grid, then their images under the map $f$ also determine a box. Let $H$ be the set of all such maps, then $H$ is a group under the operation of composition. Then is $H$ equal to $G$? It's obvious that $G$ is contained in $H$, so let's check whether the reverse relation is also true.

\begin{thm}
Given an $N \times N \times N$ grid, let $G$ be the group generated by three types of symmetries, namely permutations, rotations, and reflections. Let $H$ be the group of bijective maps $f$ between the vertices of the grid such that if eight vertices determine a box in the grid, then their images under $f$ also determine a box. When $N=2$, $H$ is strictly larger than $G$. Moreover, when $N \geq 3$, $H = G$. 
\end{thm}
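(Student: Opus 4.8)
The plan is to treat the two assertions separately. When $N=2$ the claim is immediate: the $2\times 2\times 2$ grid contains exactly one rectangular box, namely the set of all eight vertices, so the defining condition on $H$ is vacuous and $H$ is the full symmetric group on the eight vertices, of order $8!$. On the other hand every generator of $G$ is an isometry of the cube regarded as a solid (when $N=2$, swapping two layers is a reflection in the mid-plane), so $G$ lies inside the symmetry group of the cube and $|G|\le 48<8!$. Hence for $N=2$ the inclusion $G\subseteq H$ is strict.

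For $N\ge 3$ the inclusion $G\subseteq H$ is again clear, so the work is to show $H\subseteq G$; fix $f\in H$. The first step is to recover the coordinate-grid structure from the box structure alone. For distinct vertices $u,v$, write $s(u,v)\in\{1,2,3\}$ for the number of coordinates in which they differ. A short count shows the number of rectangular boxes through $u$ and $v$ is $(N-1)^{3-s(u,v)}$: in each coordinate where $u$ and $v$ disagree the box's pair of values is forced, and in each coordinate where they agree there are $N-1$ ways to choose the other value. Since $f$ permutes the vertices and, being a bijection of a finite vertex set carrying boxes to boxes, also permutes the boxes, this number is preserved by $f$; and for $N\ge 3$ the values $(N-1)^2>N-1>1$ are distinct, so $f$ preserves $s$. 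In particular $f$ preserves the relation ``$u$ and $v$ lie on a common axis-parallel line'' ($s=1$), which is precisely adjacency in the three-dimensional rook graph.

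Next I would promote this to control over lines and then planes. If three vertices pairwise lie on common axis-lines then all three lie on one axis-line (otherwise two of them would differ in two coordinates), so $f$ carries the $N$ points of any axis-line into a single axis-line, which by cardinality is its image; thus $f$ permutes the $3N^2$ axis-lines. Now let $P$ be an axis-plane, ruled by two transverse families $X_1,\dots,X_N$ and $Y_1,\dots,Y_N$ of axis-lines with each $X_j$ meeting each $Y_i$ in exactly one point. Their images are two families of axis-lines, each family internally disjoint and with every cross-pair meeting in exactly one point, and the key lemma is that any such ``grid-ruling'' configuration lies inside a single axis-plane. I would prove this lemma by hand: every line of the second image family is a transversal of the fixed line $\ell_0=f(X_1)$, the transversals of $\ell_0$ split into two directions, the internal disjointness together with the incidence constraints coming from the first family rules out using both directions, so one family lies in a plane and then drags the other into it. Granting the lemma, $f(P)$ lies in, hence equals, an axis-plane, so $f$ permutes the $3N$ axis-planes.

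To finish, note that two axis-planes are disjoint exactly when they are parallel, so $f$ preserves the partition of the planes into three parallel classes of size $N$ and induces a permutation of the three classes; since $G$ contains the coordinate transpositions (the diagonal reflections), I can compose $f$ with a suitable element of $G$ and assume $f$ fixes each parallel class, acting on the $t$-th class by some $\sigma_t\in S_N$. Because the vertex $(i_1,i_2,i_3)$ is the unique common point of the three planes $\{x_t=i_t\}$, its image is the unique common point of their image planes, namely $(\sigma_1(i_1),\sigma_2(i_2),\sigma_3(i_3))$; so $f$ is the coordinatewise action of $(\sigma_1,\sigma_2,\sigma_3)$, an element of the layer-permutation subgroup $S_N^3\subseteq G$, and we are done. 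The hard part is the grid-ruling lemma in the third paragraph; everything else is routine bookkeeping. (Alternatively, once the second paragraph shows $f$ is an automorphism of the three-dimensional rook graph, one may finish by quoting the unique prime factorization of connected graphs under the Cartesian product, which gives $\mathrm{Aut}=S_N\wr S_3=G$ for $N\ge 3$.)
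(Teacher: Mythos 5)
Your proposal is correct, and it shares the paper's one essential idea --- counting the rectangular boxes through a pair of vertices ($(N-1)^{3-s}$ of them when the vertices differ in $s$ coordinates) to conclude that every $f\in H$ preserves $s$, which is exactly where $N\geq 3$ enters --- but from that point on your route is genuinely different and, frankly, tighter. The paper passes from preserved adjacency to ``layers go to layers'' via an ad hoc nine-vertex picture, asserts that the analogous two-dimensional claim ``follows the same steps,'' and finishes by composing with symmetries until the map fixes an X-layer pointwise and declaring it must then be the identity. You instead rebuild the incidence geometry in stages: three vertices pairwise at $s=1$ are collinear, so axis-lines go to axis-lines; the grid-ruling lemma forces axis-planes to go to axis-planes; and the coordinatization then comes for free, since a vertex is the unique intersection of the three axis-planes through it, so after normalizing the induced permutation of the three parallel classes by an element of $G$ the map is literally $(i_1,i_2,i_3)\mapsto(\sigma_1(i_1),\sigma_2(i_2),\sigma_3(i_3))$, a layer permutation. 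This buys a self-contained argument with no unproved 2D analogue and no terminal ``hence the identity'' step. The only compressed point is the grid-ruling lemma, but your sketch closes up: if two lines of the second image family met $\ell_0$ in different transversal directions, a further line of the first family would have to meet both while staying disjoint from $\ell_0$, and a short case check on the three possible directions shows no such axis-line exists; once one family is coplanar, each line of the other meets $N\geq 2$ disjoint parallel lines of that plane and is dragged into it. Your parenthetical alternative --- $f$ is an automorphism of the threefold Cartesian product of $K_N$, so Sabidussi--Vizing unique factorization gives $H=S_N\wr S_3=G$ --- is also valid and shorter, at the price of citing a nontrivial theorem. The $N=2$ argument ($H=S_8$ versus $|G|\leq 48$) matches the paper's in substance.
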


\begin{proof}
When $N = 2$, there is only one box inside the $2 \times 2 \times 2$ grid, thus $H$ is the permutational group $S_8$ of all the vertices. Since $S_8$ is generated by all the transpositions, it suffices to check that $(12)$ is in $G$. It turns out that $(12)$ is not in $G$, because every map in $G$ sends adjacent vertices to adjacent vertices. (However, we've seen that $G$ contains enough maps to send the eight possible configurations of a $2 \times 2 \times 2$ grid into one another.) 

When $N \geq 3$, we'll show that $H = G$. 
First, given two adjacent vertices in an $N \times N \times N$ grid, we check whether their images under the map $f$ are also adjacent to each other. This is true. After permuting appropriate layers, their images can be made into a $1 \times 1 \times 1$ box inside the grid. We observe that there are three possible positions for them, namely they are adjacent to each other, they are on the diagonal of a face, or they are on the diagonal of the box (see Figure~\ref{fig:21}). 

\begin{figure}[tb]
\includegraphics[width = 4cm]
{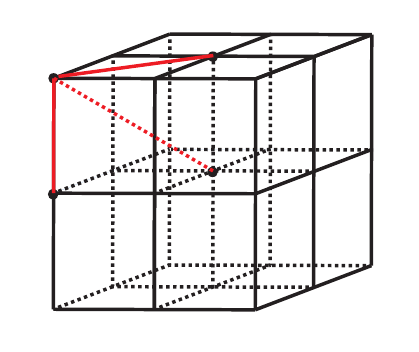}
\caption{When $N = 3$, two vertices in the grid can be made into one of these three relative positions (in red) after permuting appropriate layers and some rotations.}
\label{fig:21}
\end{figure}

If two vertices are adjacent, there are $(N-1)^2$ rectangular boxes in the grid containing them. Suppose their images under the map $f$ are not adjacent. If they are on the diagonal of a face, there are $(N - 1)$ rectangular boxes containing them. If they are on the diagonal of the $1 \times 1 \times 1$ box, there is only one box containing them. Since $f$ sends a box to a box, it follows that $(N-1)^2$ boxes are sent to either $(N-1)$ boxes or one box. This is a contradiction to the hypothesis that $f$ is bijective between vertices. Therefore, adjacent vertices are sent to adjacent vertices. Moreover, we can deduce that $f$ sends an edge of a box to an edge of another box. 

Second, let's prove that all vertices in the same layer are mapped to vertices in the same layer. It suffices to argue for the case when $N = 3$. Consider the 9 labeled vertices shown in Figure~\ref{fig:22}, it is not hard to see $f(1)$, $f(2)$, $f(3)$, and $f(4)$ are vertices of a face in some layer. Since there is an edge between 1 and 3, an edge between 3 and 5, and an edge between 3 and 5, it follows that 5 has to be in the same layer as $f(1)$, $f(2)$, $f(3)$, and $f(4)$. Similarly, 6 is also in the same layer as $f(1)$, $f(2)$, $f(3)$, and $f(4)$. Repeat the same argument for 7, 8, and 9, so the vertices from 1 to 9 are sent to vertices in the same layer. 

\begin{figure}[tb]
\includegraphics[width = 4cm]
{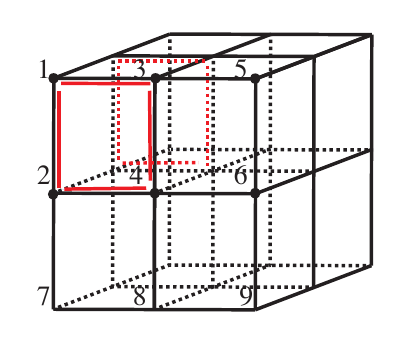}
\caption{Vertices from 1 to 9 in the front layer are sent under the map $f$ to vertices in the same layer.}
\label{fig:22}
\end{figure}

Third, given two parallel layers, let's shown that their images under the map $f$ are parallel layers. Look at the two opposite faces highlighted in red in Figure~\ref{fig:22}, using the fact that $f$ sends an edge to an edge, it's not hard to see that they are sent to two opposite faces in another box. Therefore, the two layers containing these two faces are sent to another two parallel layers. 
 
\begin{defn}
Let's call the layers that are parallel to the $yz$-plane {\it X-layers}. Similarly, we call the layers that are parallel to the $xz$- and $xy$-planes {\it Y-layers} and {\it Z-layers}, respectively.
\end{defn} 

Fourth, since $f$ permutes the three sets of $\{$X-layers$\}$, $\{$Y-layers$\}$, and $\{$Z-layers$\}$, there exists a reflectional or rotational symmetry $\pi_1$ of the grid such that $\pi_1 \circ f$ preserves each of $\{$X-layers$\}$, $\{$Y-layers$\}$, and $\{$Z-layers$\}$. Furthermore, we can permute the X-layers so that each X-layer is preserved; that is to say, there is a permutation $\pi_2$ such that $\pi_2 \circ \pi_1 \circ f$ preserves each X-layer. Similarly, there exists $\pi_3$ and $\pi_4$ such that $\pi_4 \circ \pi_3 \circ \pi_2 \circ \pi_1 \circ f$ preserves each Y-layer and each Z-layer. Let's denote the map $\pi_4 \circ \pi_3 \circ \pi_2 \circ \pi_1 \circ f$ as $f'$. Pick one X-layer and it is an $N \times N$ grid. The vertices under the map $f'$ satisfy the property that if four vertices determine a rectangle in the $N \times N$ grid, then their images also determine a rectangle. 

We claim that for an $N \times N$ grid, if a map $g$ is a bijective map between its vertices and $g$ sends a rectangle's four vertices to another rectangle's four vertices, then $g$ is in the group generated by permutations of layers, rotational symmetries, and reflectional symmetries of the $N \times N$ grid, which are defined similarly to the $N \times N \times N$ grid. For example, a layer in the $N \times N$ grid is a one-dimensional $N$ grid. The proof of the claim follows the same steps as we are showing for the $N \times N \times N$ grid. 

A permutation of layers in the $N \times N$ grid can be realized by a permutation of Z- or Y-layers in the $N \times N \times N$ grid; a rotation of the $N \times N$ grid can be realized by a rotation of the $N \times N \times N$ grid; and a reflection of the $N \times N$ grid can also be realized by a reflection of the $N \times N \times N$ grid. Therefore, there exists $\pi_5 \in G$ such that $\pi_5 \circ f'$ preserves the vertices of a X-layer. Since two opposite faces of a box in the $N \times N \times N$ grid are sent to two opposite faces in another box, it implies that $\pi_5 \circ f'$ is the identity map. Thus $f = \pi_1^{-1} \circ \ldots \circ \pi_5^{-1}$ is a map in $G$, and so $H = G$. 
\end{proof}

\begin{defn}
We say that two configurations are {\it equivalent} to each other if one can be obtained from the other through one of the symmetries in $G$.
\end{defn} 

\subsection{The discrete case: $N=3$}
Here we describe one strategy of looking for a configuration in a grid, that is to search for the minimal set of vertices that we want to {\it remove}. There are fewer points to remove than to add. Using this strategy, we can study our first meaningful case when $N$ = 3. 

To save a lot of writing, when we say we {\it mark} a point in the grid, we really mean to mark one {\it removed} point in the grid. And a box is {\it marked} if it has a removed point, otherwise we call it {\it unmarked}. So in order to satisfy the hypothesis, we want every box inside the grid to be marked. What is the minimum number of (removed) points we need to mark? To answer this question, we use the method of exhaustion when $N = 3$.

\begin{thm}
\label{thm:1}
The order of a $3 \times 3 \times 3$ grid is 22. Equivalently, the minimum number of points we need to remove from a $3 \times 3 \times 3$ grid so that the remaining set of vertices does not contain all eight vertices of any rectangular box inside is 5. Moreover, a configuration for the $3 \times 3 \times 3$ grid is unique up to equivalence. 
\end{thm}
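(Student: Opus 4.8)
The plan is to translate the box condition, layer by layer, into a two–dimensional hitting problem. Identify the $27$ vertices with $\{1,2,3\}^3$; a box is a product $\{i,i'\}\times\{j,j'\}\times\{k,k'\}$ of three $2$-element sets, so there are $27$ boxes. Given a set $S$ of removed vertices, split it into its three $Z$-layers $S_1,S_2,S_3\subseteq\{1,2,3\}^2$ according to the $z$-coordinate. A box with $z$-pair $\{k,k'\}$ has a removed corner precisely when the rectangle $R$ carved out of the $3\times3$ grid by its $x$- and $y$-pairs meets $S_k\cup S_{k'}$; hence $S$ kills every box if and only if, for each of the three pairs $\{k,k'\}$, the set $S_k\cup S_{k'}$ meets all nine rectangles of the $3\times3$ grid. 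So I would first prove the enabling lemma: in the $3\times3$ grid, a subset meeting every rectangle has at least $3$ elements, and the $3$-element ones are exactly the six permutation matrices. This is a short finite argument: a rectangle-free set has at most $6$ cells, and a $6$-cell rectangle-free set must use each of the three $2$-subsets of $\{1,2,3\}$ as one of its rows, which forces its complement to be a permutation matrix.

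With the lemma in hand the two bounds are quick. If $|S|\le4$, sort the $z$-layer sizes as $a\le b\le c$ with $a+b+c\le4$; the two smallest layers force $a+b\ge|S_i\cup S_j|\ge3$, hence $c\le1$, hence $a,b\le1$, hence $a+b\le2$, a contradiction. (The same inequalities show that whenever $|S|=5$ the sorted layer sizes in every one of the three coordinate directions are $(1,2,2)$; I will reuse this.) For the matching example, take $S=\{(1,1,3),(2,2,1),(3,3,1),(2,3,2),(3,2,2)\}$: then $S_1\cup S_2=\{2,3\}\times\{2,3\}$, which meets every rectangle since any two of the three rows include one in $\{2,3\}$ and likewise for columns, while $S_1\cup S_3$ and $S_2\cup S_3$ are permutation matrices. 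So the minimum number of removed points is $5$ and the order is $27-5=22$.

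For uniqueness, take any $S$ with $|S|=5$. By the size computation its $Z$-layers have sizes $1,2,2$; permuting $Z$-layers (a symmetry in $G$) I may assume $|S_3|=1$, say $S_3=\{p\}$. Applying to all three $Z$-layers simultaneously the $X$- and $Y$-layer permutations that carry the row and column of $p$ onto the first row and column (these lie in $G$, and since the operation is identical on every $Z$-layer it sends boxes to boxes), I may further assume $p=(1,1)$. Then the boxes with $z$-pairs $\{1,3\}$ and $\{2,3\}$ force $S_1\cup\{p\}$ and $S_2\cup\{p\}$, which have at most three cells, to meet every rectangle, so each is a permutation matrix through $(1,1)$, of which there are exactly two; and $S_1\ne S_2$, since otherwise $S_1\cup S_2$ would have only two cells and could not kill the boxes with $z$-pair $\{1,2\}$. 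Hence $\{S_1,S_2\}=\{\{(2,2),(3,3)\},\{(2,3),(3,2)\}\}$, and these two possibilities are interchanged by transposing $Z$-layers $1$ and $2$, which lies in $G$. Therefore every $5$-point removal is $G$-equivalent to the one exhibited above, which is the uniqueness assertion.

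The routine parts are the $3\times3$ rectangle lemma and the two bounds. The delicate part is the uniqueness argument, in two respects. First, one must be careful that the normalizing maps (layer permutations, and if one prefers, the diagonal reflection) genuinely belong to $G$, and that a two-dimensional symmetry of a layer is a legal symmetry of the cube only when it is performed identically on all $Z$-layers — a layer-by-layer relabelling need not preserve boxes. Second, one must extract from "the layer sizes are $(1,2,2)$ in every direction" enough rigidity that, once $S_3=\{p\}$ has been put in a standard position, the remaining two layers are completely pinned down; that rigidity is exactly what collapses the a priori many $5$-point removals into a single $G$-orbit.
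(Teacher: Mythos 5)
Your proof is correct, but it reaches the result by a genuinely different and considerably more economical route than the paper. The shared ingredient is the reduction of the box condition to a statement about pairs of parallel layers: your observation that $S$ kills every box if and only if $S_k\cup S_{k'}$ hits all nine rectangles for each pair $\{k,k'\}$ is exactly the paper's Lemma~6 on side diagrams. From there the paper runs a symmetry-pruned exhaustion: it places marked points one at a time, enumerates the inequivalent choices for the second and third points, and uses side diagrams to close or discard each branch, which occupies several figures and a long case analysis. You instead prove one structural lemma --- the minimum hitting sets of the nine rectangles of the $3\times 3$ grid have size $3$ and are precisely the six permutation matrices (via the Zarankiewicz-type count $\sum_i\binom{r_i}{2}\le 3$ for the complementary rectangle-free set) --- after which everything is arithmetic on layer sizes: $a+b\ge |S_i\cup S_j|\ge 3$ for every pair of layers forces $|S|\ge 5$ with layer profile $(1,2,2)$, and uniqueness collapses to the fact that there are exactly two permutation matrices through a fixed cell, interchanged by a $Z$-layer transposition lying in $G$. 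Your approach buys a short, figure-free, fully checkable proof together with an explicit description of all minimum configurations; the paper's exhaustion is heavier but serves as the template it reuses (with the same side-diagram tool) for the $N=4$ case. The one subtlety you rightly flag --- that the normalizing moves must be global $X$- and $Y$-layer permutations acting identically on all $Z$-layers, rather than per-layer relabellings --- is handled correctly in your argument, so I see no gap.
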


\begin{proof}
First, we can mark the first point as shown in Figure~\ref{fig:7}(i), after permuting the appropriate layers. It's not hard to see that there are still unmarked boxes inside. So we need to mark a second point. 

\begin{figure}[tb]
\includegraphics[width = 16cm]
{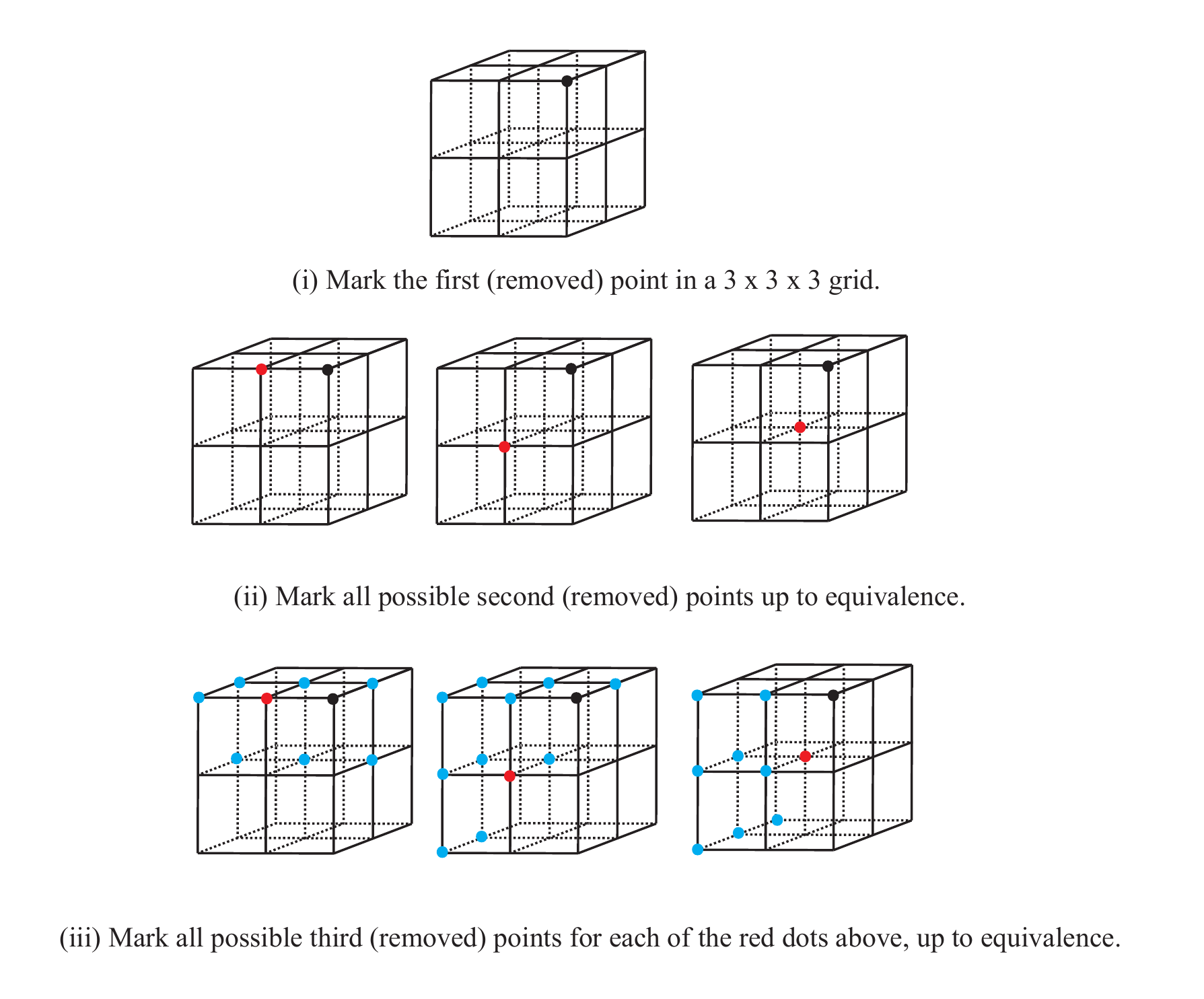}
\caption{}
\label{fig:7}
\end{figure}

Next, up to equivalence, there are three possible choices for the second point as shown in Figure~\ref{fig:7}(ii) as red dots. This is because through permutational symmetry the second point can be transported to the unit cube containing the first dot. Then we can use rotational symmetry to eliminate 4 of the 7 possible vertices. Therefore it gives us the three cases in Figure~\ref{fig:7}(ii).

It's not hard to see that we need to mark a third point in each of these three cases. For each case, we use blue dots to indicate all possible third points that we want to mark up to equivalence. This is shown in Figure~\ref{fig:7}(iii). 

Since there are quite a few blue dots, we introduce another useful tool in order to reduce the amount of work in the method of exhaustion. Before stating it formally, we will first look at an example to help us better understand the idea. 

Let's look at the rightmost grid in Figure~\ref{fig:7}(iii). If we look at it from the right-hand side, we get a square with some dots as illustrated in Figure~\ref{fig:10}. Notice that at the vertex in the center, there are two dots: one is blue and the other is red. This is because there is an overlapping if we look at the grid from the right-hand side. 

\begin{figure}[tb]
\includegraphics[width = 4cm]
{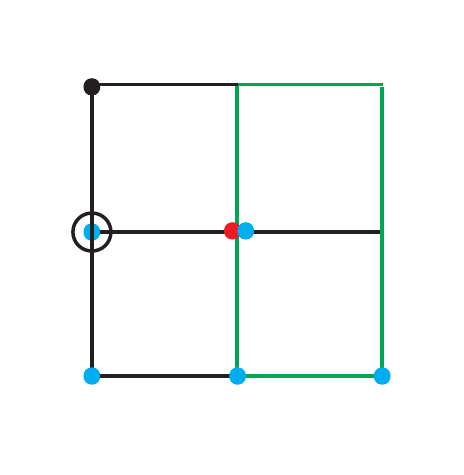}
\caption{A side view of the rightmost cube in Fig~\ref{fig:7}(ii); for the blue dot in circle, the green rectangle is unmarked.}
\label{fig:10}
\end{figure}

\begin{lem}
\label{lem:5}
For each of these blue dots in Figure~\ref{fig:10}, if a rectangle has only one dot, then we need to mark a fourth point to the grid; moreover, if there is unmarked rectangle, we need to mark fourth and fifth points to the grid. 
\end{lem}

\begin{proof}
Each rectangle in this $2 \times 2$ grid corresponds to three distinct rectangular boxes, and at least two dots are needed to mark all three of them. Therefore, if a rectangle has only one dot, we need to mark another; if a rectangle has none, we need to mark two more.  
\end{proof}

Let's apply this lemma to the blue dot in circle in Figure~\ref{fig:10} as an illustration. In the figure, we see that there is a rectangle that has not been marked yet and it is shown in green. Therefore, we need to mark two more points in the original grid given that the third marked point is this particular blue dot. 

Similarly, we can check for the other blue dots. It turns out that five of the six blue dots need two more points, except for the one on the diagonal as indicated in Figure~\ref{fig:12}. Before proceeding with this particular dot, we check the other two grids in Figure~\ref{fig:7}(iii) and find that all their blue dots also need two more points. So we are left with only the blue dot in Figure~\ref{fig:12} to check! 

\begin{figure}[tb]
\includegraphics[width = 12cm]
{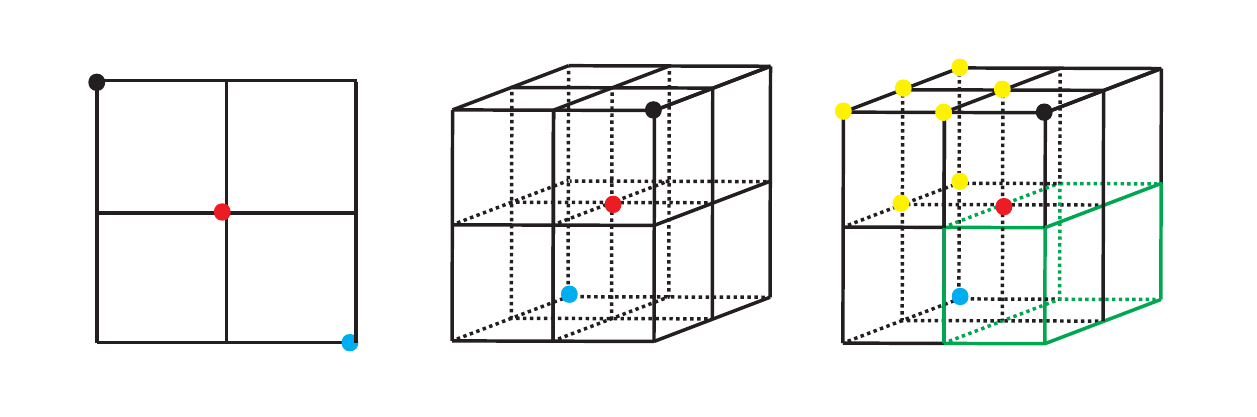}
\caption{The last blue dot that needs to be checked in Figure~\ref{fig:10}; so mark all possible fourth (removed) points to the grid.}
\label{fig:12}
\end{figure}

We need to mark a fourth point. There are two rotational symmetries and three reflectional symmetries fixing the three dots in Figure~\ref{fig:12}. This is from the observation that every such symmetry needs to fix the diagonal on which the black, red, and blue dots lie. As a result, applying these symmetries, we only need to consider 7 possible candidates for the fourth point. This is shown in Figure~\ref{fig:12} as yellow dots. It is easy to see that there is an unmarked rectangular box for all of the yellow dots (shown in green in the same figure). Therefore we need to mark a fifth point.

So far we've shown that we need to mark at least five points in an attempt to mark every rectangular box in the $3 \times 3 \times 3$ grid. Now let us check whether a sixth point is needed. We will see that five points are sufficient to get a configuration and there is a unique way of arranging these five points up to equivalence. Before proceeding, we need to introduce a definition.

\begin{defn}
Given any finite collection of parallel layers, we can project them in the orthogonal direction and get an $N \times N$ grid with some (possibly overlapping) dots, which we calle a {\it side diagram}. 
\end{defn}

As an illustration, look at the leftmost $3 \times 3 \times 3$ grid in Figure~\ref{fig:7}(iii). We pick the parallel layers that have the red and black dots and project them to the right. Then we obtain a side diagram as shown in Figure~\ref{fig:14}. Note that since each blue dot represents a possible third point, we don't have an overlapping of the blue dots in the side diagram but we do for the red and black dots. The following lemma gives a convenient way for checking whether a grid has any unmarked rectangular box. 

\begin{figure}[tb]
\includegraphics[width = 8cm]
{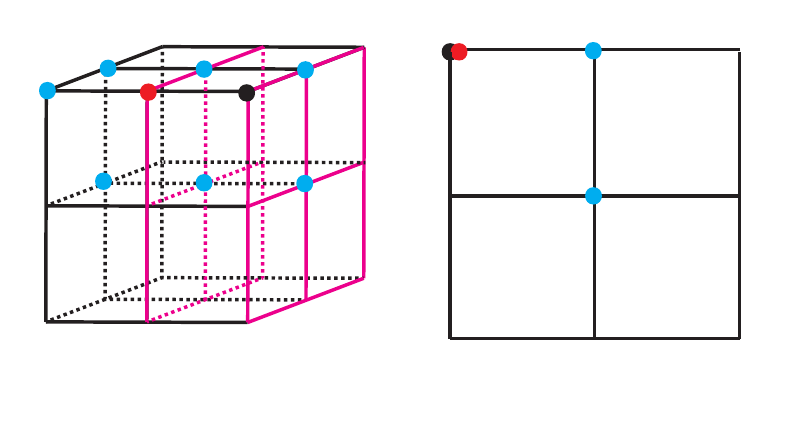}
\caption{Given two layers in pink, their side diagram if we see from the right.}
\label{fig:14}
\end{figure}

\begin{lem}
\label{lem:6}
An $N \times N \times N$ grid satisfies the condition that every rectangular box inside it is marked if and only if after we pick any two parallel layers, their side diagram has no unmarked rectangle. 
This is also equivalent to if we look at the grid just from the right (or the front, or the top) and pick any two layers, then their side diagram has no unmarked rectangle.
\end{lem}

Let's continue our proof of the Theorem~\ref{thm:1}. 

1.  For the leftmost grid in Figure~\ref{fig:7}(ii), we look at it from the front and show that a sixth point is needed, so that a five-point configuration can't be found in this case. If we look at the back two layers from the front (i.e.\ the two layers that don't contain the red and black dots), we need at least three points in them, because a $3 \times 3$ grid needs at least 3 points in order to mark all the rectangles. Once we mark 3 points, one of the two layers has no more than 2 dots. Pick this layer and the front one, their side diagram always has an unmarked rectangle, because the front layer needs at least 4 dots in order to mark all rectangles (see Figure~\ref{fig:15}). 

2. For the middle grid in Figure~\ref{fig:7}(ii), we look at it from the right and show that there exists only one five-point configuration (not up to equivalence). In the side diagram of the first two layers from the right, we need to have at least two extra dots in order to mark all the rectangles as shown in Figure~\ref{fig:15}. There are only three different ways to put these two dots if we take symmetries into account.

\begin{figure}[tb]
\includegraphics[width = 10cm]
{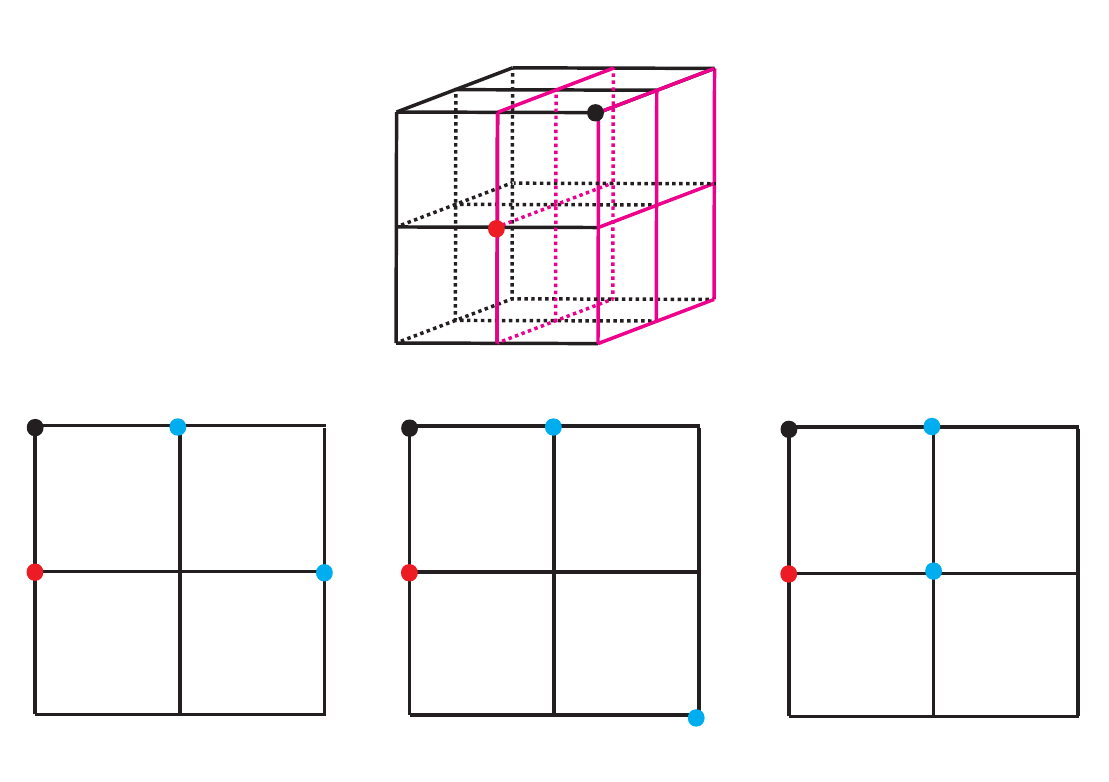}
\caption{In the side diagram of the two layers in pink , at least two extra dots (in blue) are needed in order to mark all rectangles.}
\label{fig:15}
\end{figure}

If the two extra dots are in the same layer, we must mark at least another two dots. For example, if the two extra dots are in the first layer from the right, then we pick the second and the third layers, and obtain a side diagram with only one dot. It implies that we must add at least another two dots. Therefore, six points are needed at minimum in this situation.

If each of the first two layers contains only one dot, we break into the following three cases. 

{\it Case I:} The leftmost side diagram in Figure~\ref{fig:15} needs two more points, because we can check the two possibilities as shown in Figure~\ref{fig:16}. For the left grid in Figure~\ref{fig:16}, the first and third layers give rise to a side diagram that needs at least two more points. For the right one in Figure~\ref{fig:16}, the side diagram corresponding to the first and third layers needs only one dot. However, the side diagram coming from the second and third layers also needs one dot. Those two dots can't replace one another. Therefore we need to add at least two extra points in this case too. 

\begin{figure}[tb]
\includegraphics[width = 8cm]
{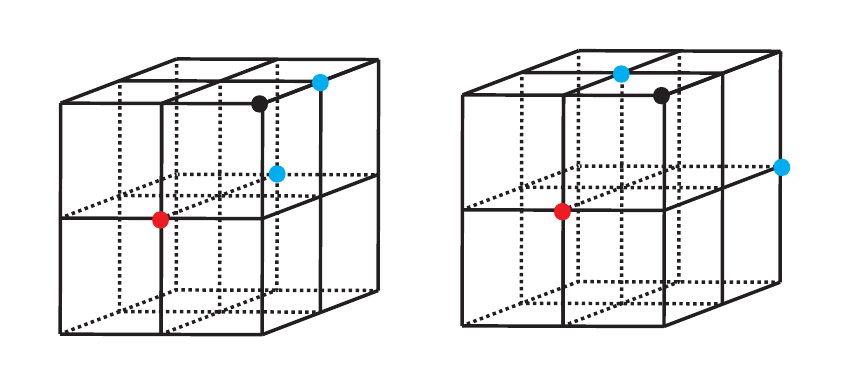}
\caption{Two possibilities for the leftmost side diagram in Figure~\ref{fig:15} if each layer in pink contains one blue dot.}
\label{fig:16}
\end{figure}

{\it Case II:} The middle side diagram in Figure~\ref{fig:15} also needs two more points, and the proof is similar to that of the previous case. 

{\it Case III:} The rightmost side diagram in Figure~\ref{fig:15} needs only one point. There are also two possibilities as in case I, but there is only one way to add one point so that all side diagrams are marked. This is shown in Figure~\ref{fig:17}. The corresponding grid is also shown in the same figure. So this is the only one five-point configuration and it is not up to equivalence. 

\begin{figure}[tb]
\includegraphics[width = 8cm]
{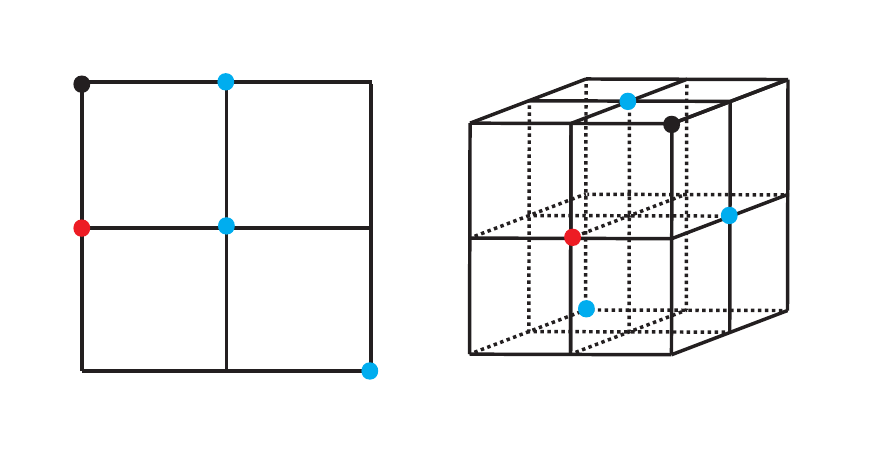}
\caption{The only five-point configuration for the rightmost side diagram in Figure \ref{fig:15} if each layer in pink contains one blue dot. }
\label{fig:17}
\end{figure}

3. For the rightmost grid in Figure~\ref{fig:7}(ii), we look at it from the right and show that there exist many possible five-point configurations. However, they are all equivalent to each other, and are also equivalent to the one we found in Figure~\ref{fig:17}. 

When looking at its all possible third-point positions as shown as the rightmost grid in Figure~\ref{fig:7}(iii), we obtain a side diagram as illustrated in Figure~\ref{fig:18}. There are three cases to consider as follows.

\begin{figure}[tb]
\includegraphics[width = 3.5cm]
{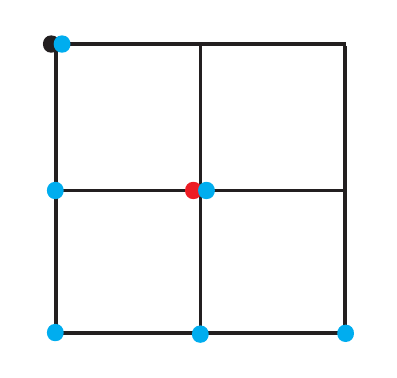}
\caption{The possible positions of a third point when looking at the rightmost grid in Figure~\ref{fig:7}(iii) from the right.}
\label{fig:18}
\end{figure}

{\it Case I:} When the blue dot overlaps with the black or red one. This brings us back to the leftmost grid in Figure~\ref{fig:7}(ii) after possible permutations of layers. We've shown that at least six points are needed for this grid. 

{\it Case II:} When the blue dot is right below the black or red one. Depending on whether the blue dot is in the same layer as the black or the red one, we can bring it back to either the leftmost or the middle grid in Figure~\ref{fig:7}(ii). So there could be many possible five-point configurations, but they are all equivalent to the one shown in Figure~\ref{fig:17}. 

{\it Case III:} When the blue dot is on the diagonal. If it is in the same layer as the black or the red dot, after permuting the appropriate layers, we go back to the middle grid in Figure~\ref{fig:7}(ii) again. If the blue dot is neither in the first layer nor the second one,  we need to look at all the possible positions of a fourth point. Up to reflectional symmetry, this is shown as yellow dots in Figure~\ref{fig:19}. After checking each yellow dot, we see that we can go back to either the rightmost or the middle grid in Figure~\ref{fig:7}(ii) again. So this completes our proof. 

\begin{figure}[tb]
\includegraphics[width = 3.5cm]
{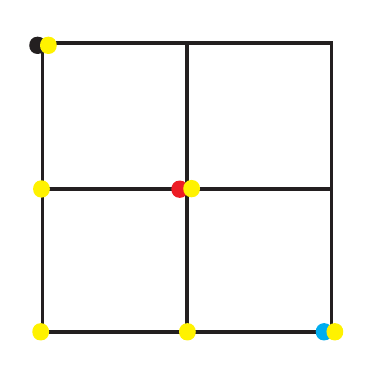}
\caption{The possible positions of a fourth point if the third point is on the diagonal in Figure~\ref{fig:18}.}
\label{fig:19}
\end{figure}

\end{proof}

\subsection{The discrete case: $N = 4$}
A $4 \times 4 \times 4$ grid is more complicated than a $3 \times 3 \times 3$ grid, because it has 64 points in it. The method of exhaustion as demonstrated above might be too much for a $4 \times 4 \times 4$ grid, but we will introduce a new technique to show that it is not hard to find the order in this particular case. 

\begin{thm}
\label{thm:2}
The order of a $4 \times 4 \times 4$ grid is 47. Equivalently, the minimum number of points we need to remove from a $4 \times 4 \times 4$ grid so that the remaining set of vertices does not contain all eight vertices of any rectangular box inside the grid is 17. Moreover, a configuration for the $4 \times 4 \times 4$ grid is not unique up to equivalence, and there are at least two equivalent classes. 
\end{thm}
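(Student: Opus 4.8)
The plan is to push the problem to two dimensions with Lemma~\ref{lem:6}, combine a counting bound with a short analysis of layer sizes, and then exhibit two inequivalent configurations explicitly.

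\emph{Reduction.} Write the grid as $[4]^3$, let $R$ be the removed set of a configuration, and for $c\in[4]$ put $R_c=\{(a,b):(a,b,c)\in R\}\subseteq[4]^2$, the removed points in the $c$-th $Z$-layer. By Lemma~\ref{lem:6} it suffices to work in the $Z$-direction: the configuration contains no box iff for every $c_1\ne c_2$ the overlay $R_{c_1}\cup R_{c_2}$ meets a corner of every rectangle of $[4]^2$, i.e.\ $R_{c_1}\cup R_{c_2}$ is a \emph{rectangle cover}. So Theorem~\ref{thm:2} is equivalent to the assertion that the minimum of $|R_1|+|R_2|+|R_3|+|R_4|$ over quadruples all of whose pairwise unions are rectangle covers equals $17$, and is attained in at least two inequivalent ways. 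Two rigidity facts will be used throughout. By the argument in the proof of Lemma~\ref{lem:1} (or directly from the convexity bound $\sum_j\binom{c_j}{2}\le\binom42$ on the column sums) a rectangle-free subset of $[4]^2$ has at most $9$ points, so a rectangle cover has at least $7$ points; moreover a $9$-point rectangle-free set is forced to have row and column sums $(3,2,2,2)$, so a $7$-point cover has the rigid profile $(1,2,2,2)$ in both directions, and an $8$-point cover has row profile $(2,2,2,2)$, $(1,2,2,3)$ or $(1,1,3,3)$.

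\emph{The bound $\ge 17$.} From $|R_i\cup R_j|\ge 7$ we get $|R_i|+|R_j|\ge 7$ for all $i\ne j$; hence at most one layer has $\le 3$ removed points, and if $|R_i|=3$ then $R_i$ is disjoint from every other $R_j$ and each $R_i\cup R_j$ is a $7$-point minimum cover. So a quadruple with total $\le 16$ has size profile $(3,4,4,4)$, $(3,4,4,5)$, or $(4,4,4,4)$, and I must eliminate each. For the first two, the size-$3$ layer $R_1$ is disjoint from $R_2,R_3,R_4$, each union $R_1\cup R_j$ has the rigid profile $(1,2,2,2)$, and $|R_i\cap R_j|\le 1$ for $i,j\ge2$; I will show these constraints over-determine the placements and cannot hold at once. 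The case $(4,4,4,4)$ is the delicate one: all six pairwise intersections have size $\le1$ (the four sets are nearly a packing) and every pairwise union is a $7$- or $8$-point cover of one of the allowed profiles. Here I will also use the counting identity obtained by summing, over all $36$ rectangles $\rho$, the fact that in a valid configuration $\rho$ is entirely kept in at most one layer, which gives $\sum_c \#\{\rho\subseteq[4]^2\setminus R_c\}\le 36$, together with the observation that a $4$-point set pairing with three others to form covers cannot be ``too spread out'' (e.g.\ cannot be a broken diagonal, since two broken diagonals differing by two do not overlay to a cover), and then close the case by a short enumeration of the shapes of a single $R_c$ (row profiles $(1,1,1,1)$, $(2,1,1,0)$, $(2,2,0,0)$, $(3,1,0,0)$, $(4,0,0,0)$).

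\emph{Construction and non-uniqueness.} For the upper bound I will write down an explicit removed set of size $17$ and verify its $\binom42=6$ overlays directly through Lemma~\ref{lem:6}: a natural candidate is a repaired version of ``three broken diagonals'' in which one layer is enlarged to $5$ points to kill the offending difference-two overlay and the fourth layer is chosen with $4$ points. For non-uniqueness I will produce a second size-$17$ configuration and separate it from the first by a $G$-invariant. Since the generators of $G$ only permute layers within a direction and permute the three directions, the unordered triple of sorted layer-size profiles (one partition of $17$ for each direction) is invariant; the admissible profiles in a valid configuration are limited to $(5,4,4,4)$, $(6,4,4,3)$, $(5,5,4,3)$, $(5,5,5,2)$, so realizing two different triples of profiles already gives two classes. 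Should both constructions share that invariant, I will distinguish them by the finer invariant given by the multiset of overlay sizes $\{\,|R_i\cup R_j|\,\}$ in each direction.

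\emph{Where the difficulty lies.} The reduction and the two-dimensional rigidity facts are routine. The real work---and presumably the point of the new technique mentioned above---is eliminating the profile $(4,4,4,4)$: four pairwise-nearly-disjoint $4$-subsets of a $16$-point grid whose pairwise unions are all rectangle covers form an extremely tight configuration, and ruling it out cleanly rather than by brute force is the crux. A secondary obstacle is checking that the two explicit $17$-point configurations are genuinely inequivalent, rather than related by some rotation or reflection already in $G$.
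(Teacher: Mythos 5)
Your skeleton follows the same route as the paper (reduce to $Z$-layer projections via Lemma~\ref{lem:6}, use the two-dimensional minimum of $7$ removed points for a $4\times4$ grid together with its rigid profile, then bound layer-size profiles and exhibit explicit configurations), and the two-dimensional rigidity facts you state are correct. But as written the argument has genuine gaps at exactly the decisive points. For the lower bound, after correctly reducing to the three remaining size profiles $(3,4,4,4)$, $(3,4,4,5)$, $(4,4,4,4)$, you only announce that you ``will show'' these cannot occur; eliminating them \emph{is} the content of the bound $\geq 17$ (without it you only have $\geq 15$), and the tools you name --- the inequality $\sum_c \#\{\rho\subseteq[4]^2\setminus R_c\}\le 36$, the ``not too spread out'' heuristic, a ``short enumeration'' of row profiles of a single layer --- are not developed far enough that one can check they actually close the $(4,4,4,4)$ case, which you yourself identify as the crux. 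Likewise the upper bound is only promised: no $17$-point removed set is written down and none of the six pairwise overlays is verified, whereas the paper exhibits such a set layer by layer (Figure~\ref{fig:24}) and checks all six side diagrams.

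The non-uniqueness part is also incomplete, and your first-choice invariant is likely to fail: both extremal configurations in the paper (Figures~\ref{fig:24} and~\ref{fig:26}) have the same $Z$-layer size profile $(2,5,5,5)$, and they are separated not by layer sizes but by the relative position of the two removed points in the $2$-point layer --- adjacent (on an edge) versus diagonal --- after which the paper argues each local pattern determines the rest of the configuration up to symmetry. You hedge with ``the multiset of overlay sizes,'' but you neither compute it for your (unspecified) examples nor show it distinguishes two classes, so both the existence of a second size-$17$ configuration and the inequivalence claim remain unproved. In fairness, your profile bookkeeping is more explicit than the paper's table (which does not address the totals $15$ and $16$ directly), but as it stands your submission is a plan rather than a proof: the elimination of the sub-$17$ profiles, the explicit constructions, and the inequivalence argument are all missing.
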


\begin{proof}
First, let's show that the order of a $4 \times 4$ grid is 9. That is to say, the minimum number of points we need to remove from a $4 \times 4$ grid so that the remaining set of vertices does not contain all four vertices of any rectangle inside the grid is 7. Let's look at the four horizontal layers inside the $4 \times 4$ grid as shown in Figure~\ref{fig:23}. Similar to Lemma \ref{lem:6}, a $4 \times 4$ grid satisfies the condition that every rectangle inside is marked if and only if for any two horizontal layers, their side diagram in the $y$-direction has at least 3 marked (removed) points. First, we show that 7 is the minimum number of points one need in order to mark every rectangle. If we mark at least 2 points in every horizontal layer, then we mark at least 8 points in the $4 \times 4$ grid. On the other hand, if we mark only one point in one horizontal layer, then we need to mark at least 2 points for the other three horizontal layers, which require only 7 points in total at minimum. It turns out that 7 points are sufficient to mark all rectangles as shown in Figure~\ref{fig:23}, and it is not hard to see that the configuration is unique up to equivalence (start with the layer that has only one marked point). 

\begin{figure}[tb]
\includegraphics[width = 4cm]
{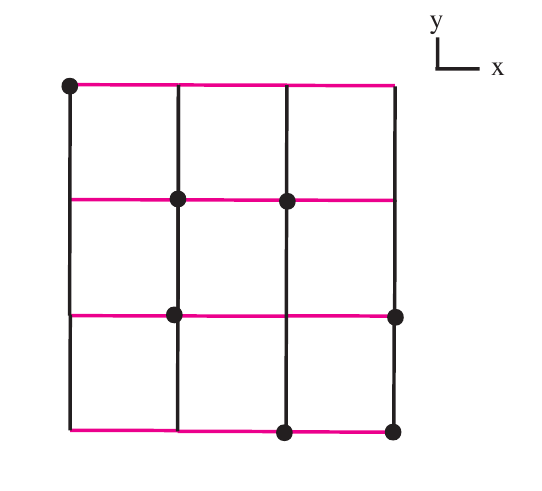}
\caption{A $4 \times 4$ grid needs at least 7 marked (removed) points such that the remaining set of vertices does not contain all four vertices of any rectangle inside the grid.}
\label{fig:23}
\end{figure}

Second, let's prove that the order of a $4 \times 4 \times 4$ grid is 47. If we pick any two Z-layers (layers parallel to the $xy$-plane), their side diagram must have at least 7 marked points. Therefore, we can get a lower bound on the number of marked points by listing all possible combinations as shown in the following table.

\begin{table} [ht]
\renewcommand\arraystretch{1.5}
\[
\begin{array}{|c|c|c|c|c|} 
\hline
\text{1st} &\text{2nd}&\text{3rd} &\text{4th} &\text{The total number}\\ \hline
0 & 7 & 7 & 7 & 21 \\ \hline
1 & 6 & 6 & 6 & 19 \\ \hline
2 & 5 & 5 & 5 & 17 \\ \hline
3 & 4 & 4 & 4 & 19\\ \hline
\end{array}
\]
\vspace{0.5cm}
\caption{All possible combinations of marked points in the four Z-layers of a $4 \times 4 \times 4$ grid.}
\end{table}
We find that at least 17 points need to be marked. If we could find a configuration with only 17 marked points, then the order of a $4 \times 4 \times 4$ grid is 47. One such configuration is shown in Figure~\ref{fig:24}, where the 3rd and 4th layers are obtained from the 2nd one by permuting the dots on the pink vertical bars. Let's check this is indeed a configuration. From construction, the side diagrams of (1) and (2), (1) and (3), and (1) and (4) are equivalent to the one shown in Figure~\ref{fig:23}, and thus have no unmarked rectangles. The side diagrams of (2) and (3), (2) and (4), and (3) and (4) are shown in Figure~\ref{fig:25}. It is easy to see that there are no unmarked rectangles in these side diagrams as well. 

\begin{figure}[tb]
\includegraphics[width = 12cm]
{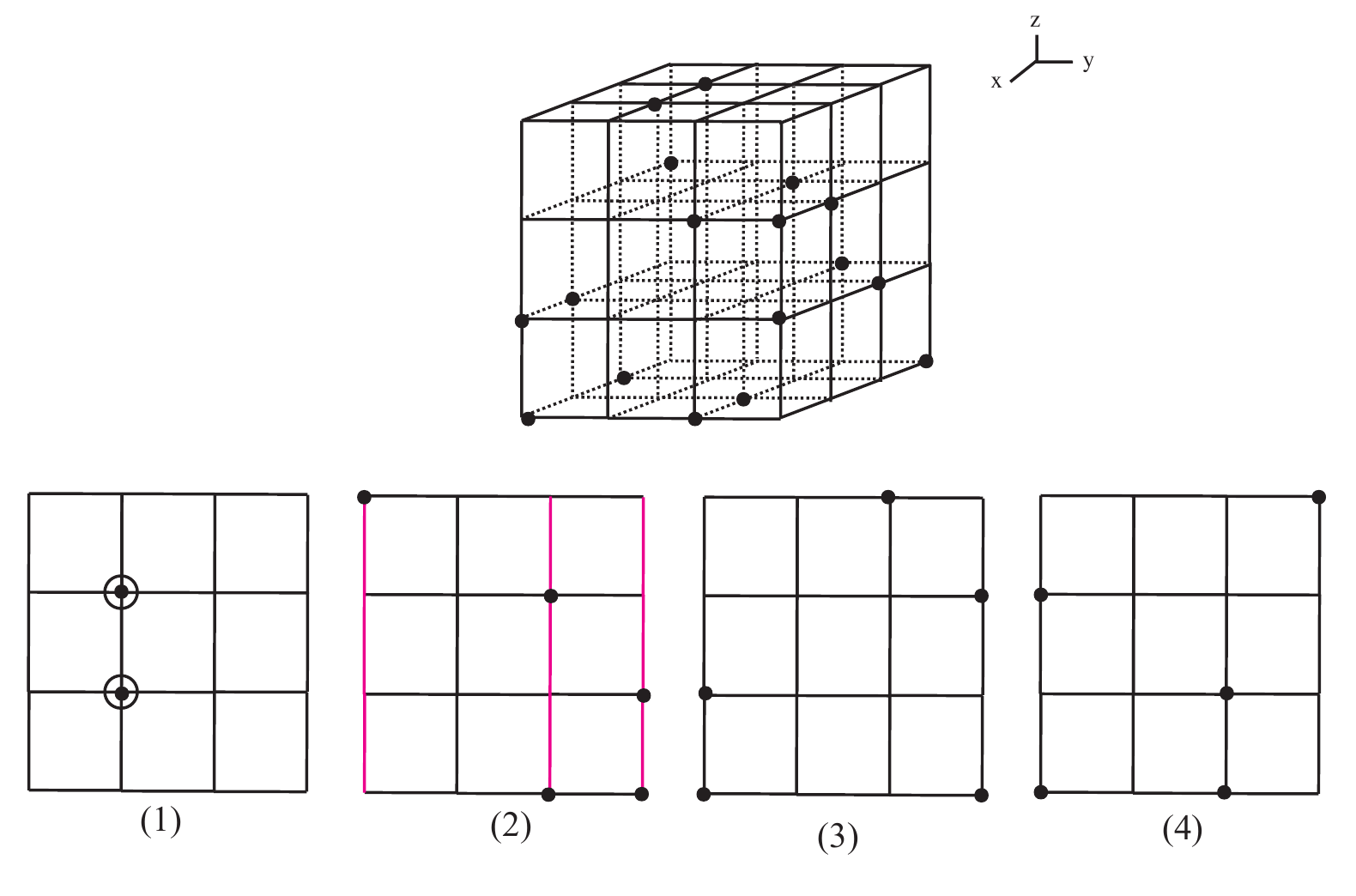}
\caption{One configuration of a $4 \times 4 \times 4$ grid with all Z-layers shown side by side.}
\label{fig:24}
\end{figure}

\begin{figure}[tb]
\includegraphics[width = 10cm]
{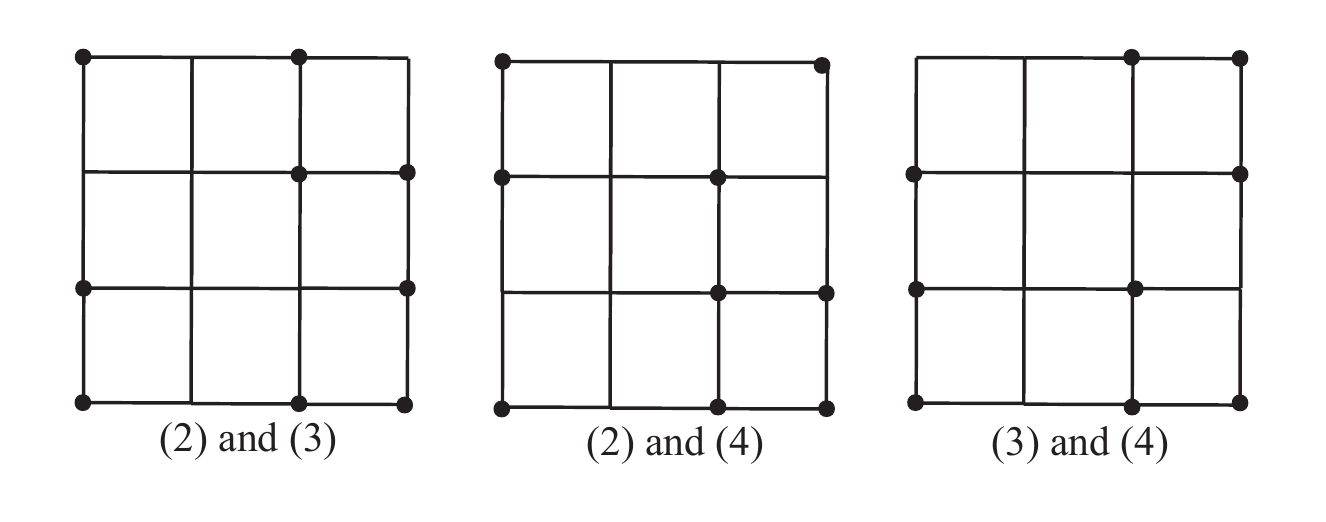}
\caption{The side diagrams of (2) and (3), (2) and (4), and (3) and (4) from the configuration in Figure~\ref{fig:24}.}
\label{fig:25}
\end{figure}

However, the configuration in Figure~\ref{fig:24} is not unique up to equivalence. We can come up with another configuration as shown in Figure~\ref{fig:26}. The reason why they are not equivalent is that the two dots (circled in two figures) in the first layer can be either on an edge or on a diagonal. If the two dots of a configuration are on an edge, we can show that this configuration is equivalent to the one in Figure~\ref{fig:24}. This is because through possible permutations and symmetries, we can make the first and second layers look like the ones shown in Figure~\ref{fig:24}. Then the third and fourth layers are determined. Similarly, if the two dots of a configuration are on a diagonal, then this configuration is equivalent to the one in Figure~\ref{fig:26}. 

\begin{figure}[tb]
\includegraphics[width = 12cm]
{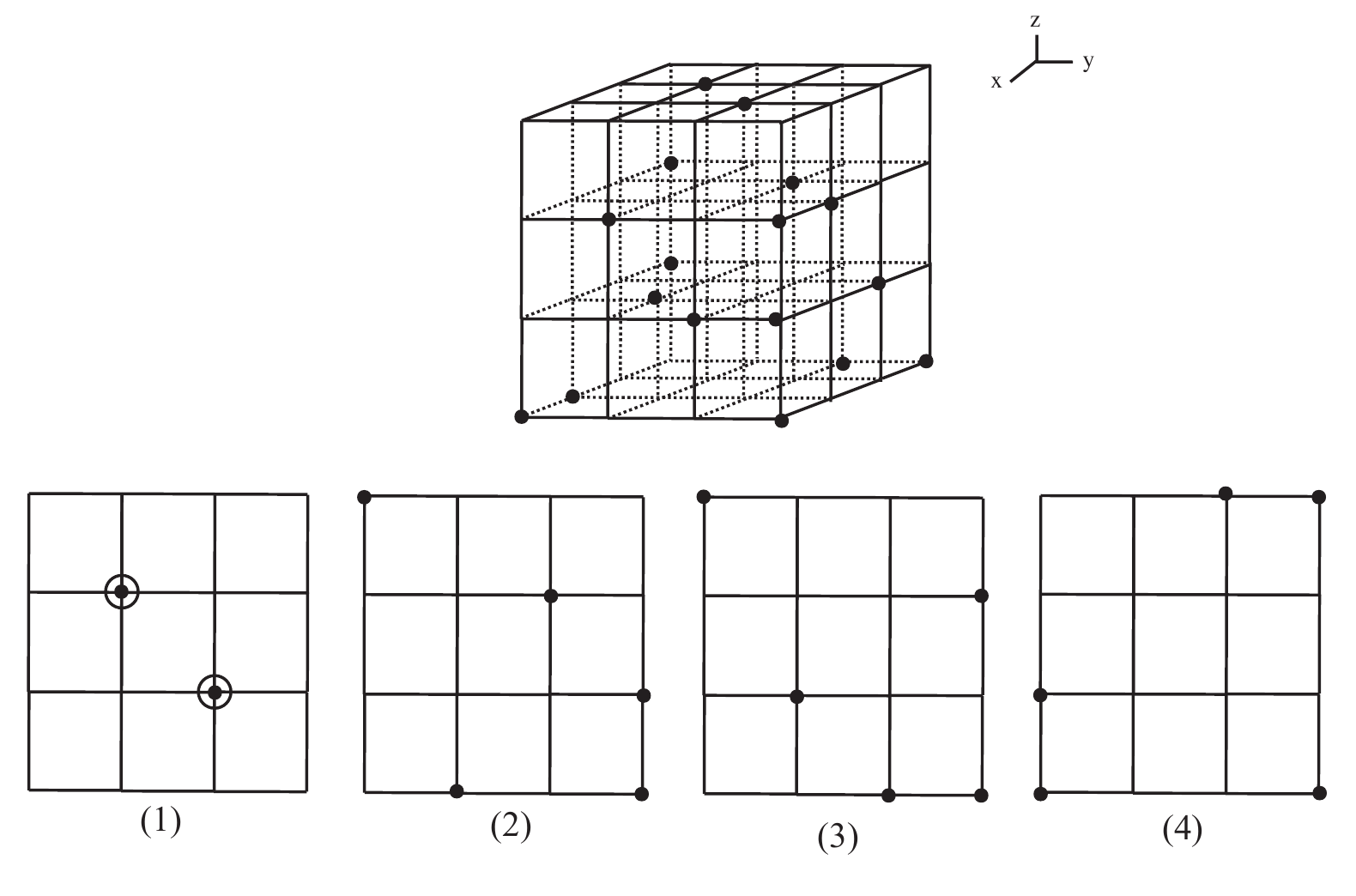}
\caption{Another configuration of a $4 \times 4 \times 4$ grid with all Z-layers shown side by side.}
\label{fig:26}
\end{figure}

\end{proof}

When $N$ gets larger, it becomes more difficult to find the the order of an $N \times n \times N$ grid. So instead of moving onto a $5 \times 5 \times 5$ grid, we hope to find another way to look at this problem. One way is to study the two-dimensional grids thoroughly and see if there is a pattern that can be generalized to the three-dimensional grids. So the next section devotes to learning two-dimensional $N \times N$ grids. There will be a notational change. We reserve the capital letter $N$ for lattice cubes and we will use the lower case $n$ for matrices later.

\section{Two dimensional $\bf N \times N$ grids}
Since an $N \times N$ grid is also an $n \times n$ matrix (here $N$=$n$), we can rephrase our question in the following way: what is the minimum number of 1's that we can put in an $n \times n$ matrix, whose entries are either 0 or 1, such that every $2 \times 2$ minor contains at least one 1? Here each 0 represents a selected vertex and each 1 represents a vertex that is not selected. If every $2 \times 2$ grid has at least one 1, no rectangle's four corners are selected simultaneously. So the original question concerning lattices has now been set in the new linear algebra frame. Our definitions of a configuration, an order, etc.\ are still applicable here, but we will limit their usage to avoid any confusion. 

Moreover, we ask a more general question: If $1 \leq k \leq n$, given an $n \times n$ matrix $A$ with entries 0 or 1, what is the minimum number of 1's such that any $k \times k$ minor contains at least one 1, or equivalently any $k \times k$ minor is not a zero matrix? We hope to find a pattern between different choices of $k$ so as to solve the question for $k = 2$. 

We begin with some easy observations. When $k=n$, there is only one $n \times n$ matrix, therefore the minimum number of 1's in $A$ is just one. On the other hand, when $k=1$, every entry in $A$ is a $1\times 1$ minor, so $n^2$ is the minimum number. Then we ask the question that as $k$ decreases from $n$ to 1, how does the minimum number gradually increases from 1 to $n^2$? Does it follow a specific pattern? 

Let's look at the next slightly harder example. When $k = n-1$, we can prove that 3 is the minimum number. This is because if there is only one 1, we may assume that it is in the upper left corner after permuting appropriate rows and columns. Then the $(n-1) \times (n-1)$ minor in the lower right corner of $A$ is a zero matrix as shown in bold below: 
\[  A = \left[ \begin{array}{cc}
				1	&       		\\
				        &      \left [\begin{array}{ccc}
				        					      \bf  0 &\ldots &\bf 0 \\
									        \vdots &  &\vdots \\
									       \bf  0&\ldots   &\bf 0
				        						\end{array} \right]    		
				
			\end{array}
		\right],\]				
which does not satisfy the hypothesis. Suppose there are only two 1's in $A$, again we may arrange the rows and columns so that one of the 1's is in the $(1,1)$-entry. If the second 1 is in the first column or the first row, the preceding argument gives a contradiction. Otherwise, we may assume that it is in the $(2,2)$-entry of $A$. Consider the $(n-1) \times (n-1)$ minor whose rows exclude the first one and whose columns exclude the second one, one can check that this minor is zero as shown in bold below:
\[  A = \left[ \begin{array}{ccc}
				1	&       	&	\\
				\bf 0	& 1		 &	\begin{array}{ccc} \bf 0 & \ldots & \bf 0 \end{array}\\
\begin{array}{c} \bf 0 \\ \vdots \\ \bf 0 \end{array}  &     	&	 \left [\begin{array}{ccc}
				        					       \bf 0 &\ldots &\bf 0 \\
									        \vdots &  &\vdots \\
									       \bf  0&\ldots   &\bf 0
				        						\end{array} \right]

			\end{array} \right].
		\]
Therefore $A$ has at least three 1's. In fact three 1's are sufficient, because the matrix with three 1's on the diagonal and the rest entries being zero has no $(n-1) \times (n-1)$ minor that is a zero matrix. The reason is that every $(n-1) \times (n-1)$ minor in an $n \times n$ matrix has at least $(n-2)$ entries on the diagonal, so it must contain at least one of the three 1's on the diagonal. Let's summarize the result in the following lemma. Moreover, we will generalize it to $n/2 < k \leq n$.
 
 \begin{lem}
 Suppose $A$ is an $n \times n$ matrix whose entries are either 0 or 1. Let an integer $1 \leq k \leq n$ be given. If any $k \times k$ minor of $A$ contains at least one entry of 1, then the number of 1's in $A$ is at least $2(n-k)+1$ for $ n/2 < k \leq n$. Moreover, if it is exactly equal to $2(n-k)+1$, $A$ is uniquely determined up to permutations of rows and columns.
 \end{lem}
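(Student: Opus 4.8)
The plan is to recast the hypothesis as a covering statement. Write $m=n-k$; the hypothesis $n/2<k$ is precisely the inequality $2m+1\le n$. Form the bipartite graph $G(A)$ whose vertex classes are the $n$ rows and the $n$ columns of $A$, with an edge for each entry equal to $1$. A $k\times k$ minor is obtained by deleting $m$ rows and $m$ columns, and it is the zero matrix exactly when every edge of $G(A)$ has an endpoint among the deleted rows and columns; hence ``every $k\times k$ minor contains a $1$'' is equivalent to ``the $1$'s of $A$ cannot be covered by $m$ rows together with $m$ columns.'' The only tool I need is a greedy covering lemma: a bipartite graph with at most $a+b$ edges has its edges covered by at most $a$ of its rows and at most $b$ of its columns (for $a,b\ge 0$ not exceeding the number of available rows and columns). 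This has a one-line proof: if at most $a$ rows meet edges, use those rows; otherwise take $a$ rows each meeting an edge, which cover at least $a\ge (a+b)-b$ edges, leaving at most $b$ edges, and one column per remaining edge finishes the cover.

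For the lower bound I would argue by contradiction: if $A$ has the minor property but at most $2m$ ones, then $G(A)$ has at most $2m=m+m$ edges, so the covering lemma (with $a=b=m<n$) covers all $1$'s with at most $m$ rows and at most $m$ columns; padding to exactly $m$ of each and deleting them exhibits a zero $k\times k$ minor, a contradiction. So $A$ has at least $2m+1$ ones. (The bound is attained: since $2m+1\le n$, the matrix with $1$'s at $(1,1),\dots,(2m+1,2m+1)$ and $0$'s elsewhere has the minor property, by the same check already used above for $k=n-1$.)

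For uniqueness, suppose $A$ has exactly $2m+1$ ones and the minor property. If some row $r$ held two of them, then deleting $r$ leaves a bipartite graph with at most $2m-1=(m-1)+m$ edges, which the covering lemma covers with at most $m-1$ rows and at most $m$ columns; adjoining $r$ gives a cover of all $1$'s of $A$ by at most $m$ rows and $m$ columns, a contradiction. So no row, and by the symmetric argument no column, contains two $1$'s: the $2m+1$ ones occupy distinct rows and distinct columns. Since $2m+1\le n$, permuting the rows and the columns moves them onto the diagonal $(1,1),\dots,(2m+1,2m+1)$, so $A$ is uniquely determined up to permutations of rows and columns.

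I expect the only real idea to be the covering reformulation; after that the argument is short. The point to be careful about is the asymmetric budget in the uniqueness step --- one deletes only the offending row $r$, spending a single unit of the row budget, so that $m-1$ rows and a full $m$ columns remain for everything else. It is also worth double-checking that $2m+1\le n$ is equivalent to $n/2<k$, since that is exactly what lets the extremal diagonal configuration fit inside an $n\times n$ matrix; when $k\le n/2$ the same reasoning shows the true minimum is strictly larger than $2(n-k)+1$.
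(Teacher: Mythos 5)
Your proof is correct, and it takes a genuinely different route from the paper's. You recast ``every $k\times k$ minor contains a $1$'' as the covering statement ``the $1$'s cannot be covered by $m=n-k$ rows together with $m$ columns,'' and then everything follows from one elementary greedy covering lemma: at most $a+b$ edges of a bipartite graph can be covered by $a$ rows and $b$ columns. With $a=b=m$ this gives the lower bound $2m+1$, and with the asymmetric budget $a=m-1$, $b=m$ (after deleting a hypothetical row containing two $1$'s) it shows that in the extremal case every row and every column carries at most one $1$, from which the diagonal form and uniqueness are immediate since $2m+1\le n$. The paper instead packs the at most $2(n-k)$ ones into a $2(n-k)\times 2(n-k)$ submatrix and invokes the later Theorem~\ref{thm:n/2} (the bound $\alpha(k,2k)=3k+1$) to locate an $(n-k)\times(n-k)$ zero minor inside it, then pads with zero rows and columns; its uniqueness argument is a case analysis with explicit block forms depending on whether the $1$'s repeat rows or columns. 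Your argument buys self-containedness (no forward reference to a harder theorem proved later) and a single mechanism handling both the bound and uniqueness, at the cost of none of the structural detail; the paper's route is heavier but mirrors the block structure it reuses in the $k=n/2$ analysis. The only points worth making explicit if you write this up are the routine verification that the diagonal matrix with $2(n-k)+1$ ones indeed meets every $k\times k$ minor (the intersection of the chosen $k$ rows and $k$ columns has size at least $2k-n$, so it misses at most $2(n-k)$ diagonal positions), and the degenerate case $m=0$, where the uniqueness claim is trivial.
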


\begin{proof}
The proof follows similarly as what we've shown for the case when $k=n-1$. 
First let's assume that $n$ is an even integer. Suppose for the sake of contradiction that the number of 1's in $A$ is less than or equal to $2(n-k)$. Since $n/2 +1 \leq k \leq n$, it implies that $0 \leq 2(n-k) \leq n-2$. If $2(n-k) = 0$, we return to the case when $k = n$ and the minimum number of 1's is proved to be 1. Let's assume that $n-k \geq 1$. Since $2(n-k) \leq n-2$, these 1's can be inductively fit into the first $2(n-k)$ rows and $2(n-k)$ columns of $A$ after appropriate permutations. Moreover, among these $2(n-k)$ rows and $2(n-k)$ columns, we may select $n-k$ rows and $n-k$ columns such that the $(n-k) \times (n-k)$ minor determined by them contains no 1 at all. This is due to a theorem which will be proved shortly (see Theorem~\ref{thm:n/2}). Adding the remaining $2k-n$ rows and $2k-n$ columns of zeros to these $n-k$ rows and $n-k$ columns, we find a trivial $k \times k$ minor, which leads to a contradiction. Therefore the number of 1's in $A$ is at least $2(n-k)+1$. 

Next consider the diagonal matrix $M$ whose first $2(n-k)+1$ diagonal entries are 1 and else are 0:
\[  M = \left[ \begin{array}{cc}
			\left[	\begin{array}{c}     \\
							     	I_{2(n-k)+1} \\
							     			\\
				\end{array} \right]
												&       		\\
				    							  	&    	0		
		\end{array}
		\right].\]			
Given a $k\times k$ minor, it must contain at least $2k-n$ diagonal entries, where $2k >n$ by hypothesis. Since $2k-n + 2(n-k)+1 = n+1$, the $k \times k$ minor has at least one 1. Thus $2(n-k)+1$ is the minimum number.

Finally we are ready to show the uniqueness. Suppose $A$ satisfies the hypothesis in the lemma and the number of 1's in $A$ is precisely $2(n-k)+1$. If these 1's are observed in different columns and different rows, they could be rearranged by permuting rows and columns if necessary to look exactly the same as the diagonal matrix $M$. Now suppose the 1's in $A$ are not in different rows or columns, we hope to reach a contradiction. 

{\it Case 1}: If they are not in different rows and also not in different columns, a $2(n-k) \times 2(n-k)$ block matrix could contain all of them. Applying Theorem~\ref{thm:n/2} again and the previous argument yields a contradiction. More precisely, if $n - k \geq 1$, $3(n-k)+1 > 2(n-k)+1$, it implies that there exists a $(n-k) \times (n-k)$ minor of zeros in the $2(n-k) \times 2(n-k)$ block matrix, thus $A$ has a $k \times k$ zero minor. 

{\it Case 2}: If they are in different rows but not in different columns (or in different columns but not in different rows), say they are inside a $(2(n-k)+1)\times p$ block matrix, where $1 \leq p \leq 2(n-k)$. When $p \leq n-k$, at least $k$ zero columns are present in $A$, which is impossible. Assume $(n-k)+1 \leq p \leq 2(n-k)$, let's denote the number of columns with only one 1 as $x$. Since each of the rest $p-x$ columns has more than one 1, the total number of 1's is at least $x + 2(p-x)$, that implies $ 2(n-k)+1 \geq x + 2(p-x) \Rightarrow x \geq 2p - 2(n-k)-1$,
which is greater than or equal to 1 (since $p-(n-k) \geq 1$ by our hypothesis). Therefore we may assume that the 1's belonging to these $x$ columns are on the diagonal, because they are also in different rows by our assumption. Furthermore, for the remaining 1's, since none of them are in the same row, we can group them in a nice way, namely into vertical column vectors of the form $(1, 1,\ldots, 1)^t$. As a result, the matrix $A$ can be put in the following form:
\[A=\left[\begin{array}{cccc|c}
		\left[	\begin{array}{cccc}
			&	&	\\
			& I_{x}	&\\
			&	&	
			\end{array}	
							\right] &					&		&	 	&					\\
								&\left[ \begin{array}{c}
								1 \\ \vdots\\ 1
								\end{array} \right]		&		&		&	0				\\
								&					& \ddots	&		&					\\
								&					&		& \left[ \begin{array}{c}
																1 \\ \vdots\\ 1
																\end{array} \right]  	&		\\ \hline
								&	0				&		&		&	0			
			\end{array} \right].
\]

What remains to show in order to obtain a contradiction is to select $k$ rows and $k$ columns so that the $k \times k$ minor determined by them is a zero matrix. For the rows, let's pick the last $k$ rows in the above form of $A$, in which the bottom $2k-n-1$ of them are rows of zeros, and the other $n+1 - k$ rows above them may contain the vertical column vectors of 1's, and some of the 1 in $I_x$. In fact, since the number of rows containing the vertical column vectors of 1's is $2(n-k)+1 - x$, we may have picked at most
\begin{equation*}
n+1 - k - [2(n-k)+1 - x] = x - (n-k)
\end{equation*}
of the bottom rows of $I_x$ if this integer is positive. 
Next for the columns, let's pick the first $p-(n-k)$ and the last $n-p$ of columns in the above form of $A$. Since the last $n-p$ columns are just zeros, there is no problem with them. Then for the first $p-(n-k)$ columns, since
\begin{equation}
x - (p-(n-k)) \geq p-((n-k)+1) \geq 0, \nonumber
\end{equation}
we need to make sure that they don't intersect with any of the previously selected rows, in particular the first $x-(n-k)$ of them. Since
\begin{equation*}
x-(n-k) + p-(n-k) = x + p-2(n-k) \leq x,
\end{equation*}
we won't introduce any 1 that is on the diagonal of $I_x$. Therefore we find a $k \times k$ minor that has only 0 entries. This is a contradiction. So we conclude that if the number of 1's in $A$ is precisely $2(n-k)+1$ and $A$ has no $k \times k$ minor of zeros, then A must be in the same form as $M$ up to permutations of rows and columns.  

For the case when $n$ is an odd integer, the proof is exactly the same, thus completing the proof of the lemma. 
\end{proof}

Henceforth we hope that the pattern continues to hold for $k \geq n/2 $. More precisely, let $\alpha(k,n)$ denote
the least number of 1's in an $n \times n$ matrix, in which no $k \times k$ minor is trivial. For $n/2 < k \leq n$, as $k$ decreases by 1, we've shown that $\alpha(k,n)$ increases by 2. If this continued to be true for $1 \leq k \leq n/2$, then for $k=1$ the least number would be 
\begin{equation*}
2(n-1)+1 = 2n-1,
\end{equation*}
which is strictly less than $n^2$, under the reasonable assumption that $n \geq 2$, and thus a contradiction. We deduce that  the pattern has to start changing somewhere between 1 and $n/2$. In fact, as we will see in the following theorem, the change occurs right at the ``middle point'' $k = n/2$ if $n$ is even, and $k =(n-1)/2$ if $n$ is odd, where the number $2k$ stops being greater than $n$. The easier case is when $n$ is an even integer and $k = n/2$, as $k$ decreases from $n/2 + 1$ to $n/2$, $\alpha(k,2k)$ increases by 2 plus $n/2$ (see Theorem~\ref{thm:n/2}). This is somewhat suggestive, because suppose this new pattern persists all the way down to $k=1$, then $\alpha(k,n)$ would be equal to:
\begin{equation*}
\alpha(k,n) = 2(n-(\frac{n}{2}+1))+1 + (2+\frac{n}{2})\frac{n}{2} = \frac{n^2}{4} + 2n -1,
\end{equation*}
which is close to $n^2$. On the other hand, when $n$ is an odd integer and $k = (n-1)/2$, the formula for $\alpha(k,n)$ turns out to be much more complicated than we might expect. So we will reserve the discussion for this case later.

\subsection{The case when $n$ is even and $ k = n/2$}

\begin{thm} \label{thm:n/2}
	Let $k$ be any positive integer $\geq 2$ and $n = 2k$. Given an $n \times n$ matrix $A$ with entries 0 or 1, suppose any $k \times k$ minor of $A$ contains at least one entry of 1, then the number of 1's in $A$ is at least $3k+1$. 
	Moreover, in the case that the number of 1's in $A$ is exactly $3k+1$, $A$ is unique up to permutations of rows and columns.
\end{thm}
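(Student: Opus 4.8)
The plan is to reformulate the hypothesis as a forbidden-configuration statement and then treat the lower bound and the uniqueness separately.

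First I would restate the condition: ``every $k \times k$ minor of $A$ contains a $1$'' says exactly that $A$ contains no $k \times k$ all-zero submatrix. In particular $A$ has at most $k-1$ all-zero rows, and symmetrically at most $k-1$ all-zero columns, since $k$ all-zero rows together with any $k$ columns already form a forbidden minor. Write the number of all-zero rows as $k-s$ with $1 \le s \le k$, so there are $k+s$ nonzero rows; list their weights (numbers of $1$'s) as $\rho_1 \le \cdots \le \rho_{k+s}$, each $\ge 1$. The crucial observation: if the $s$ lightest nonzero rows had total weight $\le k$, their $1$'s would occupy at most $k$ columns, leaving at least $k$ columns all-zero on these $s$ rows; combining those $s$ rows, the $k-s$ all-zero rows, and $k$ of the empty columns produces a forbidden $k \times k$ all-zero submatrix. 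Hence $\rho_1 + \cdots + \rho_s \ge k+1$, and since $s \le k$ this forces $\rho_s \ge 2$; then $\rho_{s+1}, \dots, \rho_{k+s}$ are $k$ rows each of weight $\ge 2$, so the number of $1$'s is at least $(k+1) + 2k = 3k+1$. To see $3k+1$ is attained (and to guess the extremal shape), I would exhibit the block-diagonal matrix consisting of a $(k-1) \times (k-1)$ permutation matrix and a $(k+1) \times (k+1)$ matrix $B$ whose $1$'s, read as a bipartite graph between rows and columns of $B$, form a single $2(k+1)$-cycle, so that every row and column of $B$ has exactly two $1$'s, with $0$'s off the two blocks; a direct check shows this matrix has no all-zero $k \times k$ submatrix.

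For uniqueness, suppose the number of $1$'s equals $3k+1$. Equality throughout the argument above forces every nonzero row, and symmetrically every nonzero column, to have weight $1$ or $2$. I would first rule out all-zero rows and columns. If $s \le k-1$, then the requirement that every $s$ nonzero rows cover at least $k+1$ columns (padding with the $k-s$ all-zero rows), applied to the lightest possible choice of $s$ such rows, namely the $2s-k-1$ rows of weight $1$ together with $k+1-s \ge 2$ rows of weight $2$, forces all of those $1$'s to be distinct; letting the weight-$2$ rows vary, any two weight-$2$ rows must then have disjoint supports, which would require $2(2k+1-s) + (2s-k-1) = 3k+1 > 2k$ distinct columns — impossible. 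Hence $s = k$ and, symmetrically, there are no all-zero columns; counting then gives exactly $k-1$ rows and $k-1$ columns of weight $1$ and $k+1$ of each of weight $2$. Applying the ``cover at least $k+1$ columns'' constraint to the $k-1$ weight-$1$ rows plus a single weight-$2$ row shows the weight-$1$ rows have distinct supports filling a set $D$ of $k-1$ columns, that every weight-$2$ row has both its $1$'s outside $D$, and (by a count of column weights) that $D$ is precisely the set of weight-$1$ columns. Thus, after permuting rows and columns, $A$ splits as a $(k-1) \times (k-1)$ permutation matrix, a $(k+1) \times (k+1)$ block $B$ with all row and column weights equal to $2$, and $0$'s elsewhere. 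Finally, reading $B$ as a $2$-regular bipartite graph, hence a disjoint union of cycles, I would show that $A$ avoids a $k \times k$ all-zero submatrix precisely when $B$ is connected: if $B$ had two components with $j_1 \ge 2$ and $j_2 \ge 2$ vertices on the two sides, then the $j_1$ rows of one component and the $j_2$ columns of another are mutually all-zero, and since the permutation block of size $k-1$ has an all-zero $i \times i'$ submatrix whenever $i + i' \le k-1$, this extends to a forbidden $k \times k$ minor of $A$. Hence $B$ is a single $2(k+1)$-cycle, and since any two spanning bipartite cycles differ by a relabeling of rows and of columns, $A$ is unique up to permutations of rows and columns.

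The main obstacle is the uniqueness statement rather than the bound. The two delicate points are excluding all-zero rows and columns in the extremal case — where one must convert the ``every $s$ rows cover at least $k+1$ columns'' condition into support-disjointness of all weight-$2$ rows and then count columns — and pinning down the central $(k+1) \times (k+1)$ block, where one has to recombine an all-zero submatrix of $B$ with a carefully chosen matching-avoiding submatrix of the permutation block in order to reconstruct an honest $k \times k$ all-zero submatrix of $A$.
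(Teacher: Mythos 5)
Your proposal is correct, and it arrives at the same extremal matrix as the paper (a $(k-1)\times(k-1)$ permutation block plus a $(k+1)\times(k+1)$ block whose $1$'s form a single $2(k+1)$-cycle), but by a genuinely different and in places cleaner route. For the lower bound, the paper sorts the rows by weight and runs a case analysis on the total weight $S$ of the lightest $k-1$ rows (a row of weight $\ge 3$ among them; $S<k-1$; $S=k-1$; $S>k-1$); your single inequality $\rho_1+\cdots+\rho_s\ge k+1$ for the $s$ lightest nonzero rows, obtained by padding with the $k-s$ zero rows, subsumes all of these cases at once and makes the equality analysis ($\sum_{i\le s}\rho_i=k+1$ and every remaining row of weight exactly $2$) immediate; it also handles zero rows uniformly, which the paper treats only implicitly. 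For uniqueness, the paper reconstructs the block $C$ entry by entry, repeatedly invoking the condition that every $i$ of its rows must span $i+1$ columns until the path/cycle pattern is forced; you instead observe that $C$ is a $2$-regular bipartite graph, hence a disjoint union of even cycles, and reduce uniqueness to connectivity. That is more conceptual, and it buys a uniform verification of the example as well: a zero $a\times b$ submatrix of the cycle block corresponds to an independent set of $a+b$ vertices in $C_{2(k+1)}$ meeting both sides, so $a+b\le k$ when $a,b\ge 1$, while avoiding the permutation block forces $a+b\ge k+1$ --- replacing the paper's three-case analysis of index patterns.

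Two points to tighten. First, the ``direct check'' that the cycle construction works is exactly the independent-set bound just described; you state the biconditional (no forbidden $k\times k$ minor iff $B$ is connected) but only argue the direction needed for uniqueness, so the sufficiency direction should be written out. Second, in the disconnected case, taking ``the $j_1$ rows of one component and the $j_2$ columns of another'' only extends to a $k\times k$ zero minor when $B$ has exactly two components, since you need $j_1+j_2\ge k+1$ so that $(k-j_1)+(k-j_2)\le k-1$ fits inside the permutation block; with three or more components, take the rows of one component against the columns of all the others, so that the two sizes again sum to $k+1$.
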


\begin{proof}
	We can start with a few examples, because there is a pattern in them which helps to explain the proof of this theorem. 
	
	The first example is when $k = 2$, and a matrix with seven 1's satisfying the hypothesis looks like the following: 
\[	\left( \begin{array}{cccc}
			1 & 0 & 0 & 0 \\
			0 & 1 & 1 & 0 \\
			0 & 1 & 0 & 1 \\
			0 & 0 & 1 & 1
		\end{array}	\right).    \]
It's easy to check that any $2 \times 2$ minor is not a zero matrix. In fact, it suffices to check that for any $2 \times 4$ minor, there are at least three nonzero columns. 

	The next example is when $k = 3$, and a matrix with ten 1's satisfying the given condition is shown as follows:
\[	\left( \begin{array}{cccccc}
			1 & 0 & 0 & 0 & 0 & 0 \\
			0 & 1 & 0 & 0 & 0 & 0 \\
			0 & 0 & 1 & 1 & 0 & 0 \\
			0 & 0 & 1 & 0 & 1 & 0 \\
			0 & 0 & 0 & 1 & 0 & 1 \\
			0 & 0 & 0 & 0 & 1 & 1 
		\end{array}	\right).    \]
To verify that any $ 3 \times 3$ minor is not zero, we can simply look at all $3 \times 6$ minors and make sure that each has at least 4 nonzero columns. However, this method requires us to check 20 $3 \times 6$ minors. There is a better way than this. Let's first observe that if the first two rows are included in any $ 3 \times 6$ minor, since each of the rest rows has two 1's, the minor must have 4 nonzero columns. Therefore, what remains to check is the following two cases: 

{\it Case 1}:  If we pick one row from the first two rows and two rows from the rest four rows,  then it suffices to check that every $2 \times 4$ minor in the following matrix:
	\[	\left( \begin{array}{cccc}
			1 & 1 & 0 & 0 \\
			1 & 0 & 1 & 0 \\
			0 & 1 & 0 & 1 \\
			0 & 0 & 1 & 1
		\end{array}	\right)   \]
has at least three nonzero columns.	Here there are only six $2 \times 4$ minors, and it is easy to see that all six minors have at least three nonzero columns. 

{\it Case 2}: If we don't choose any of the first two rows, then it suffices to check that every $3 \times 4$ minor of the above matrix has at least four columns being nontrivial. After checking four $3 \times 4$ minors, this is also true. 

As you might guess from the above two examples, in the general case, a matrix with $3k+1$ number of 1's has the following pattern: 
\[	\left[ \begin{array}{ccc|ccccc}
			1 & 		 & 		   & 	&  		&  		&  		&\\
			   & 	 	\ddots & 	   &  	& 		&  		&  		& \\
			& 		 & 		1 &  	& 		&  		&  		& \\ \hline
			 &  		& 			   & 1 & 		1 & 		 &  	  & \\
			 &  		& 			   & 1 & 		0 & 		1 &  		&\\
			 & 		& 		           & 	& 		 & 		 \ddots  &   & \\
			 &  		& 			  & 	& 		&		1 & 0 & 1 \\
			 &  		& 			  & 	& 		&		0 & 1 & 1 
															\end{array}	\right].    \]
															
Let's describe the above matrix more precisely. Denote the matrix by $M$. For the first $k-1$ diagonal entries of $M$, we put 1's in them. What remains in $M$ is a $(k+1) \times (k+1)$ block matrix, call it $B$. The first row of $B$ has two 1's at the very beginning, and the last row has two 1's in the end. For each row in the middle, the two 1's are located next to the diagonal entry, one to the right and the other to the left. As a summary, $B$ is characterized by the following properties:
\[ \begin{array}{l}
b_{11} = b_{12} = 1 \nonumber\\
b_{k+1,k}= b_{k+1, k+1}=1 \nonumber\\
b_{j, j-1}=b_{j, j+1}=1, \text{ for } 2 \leq j \leq k, \nonumber
\end{array}  \]
and every other entry in $B$ is zero. Moreover, 
\[  M = \left[ \begin{array}{cccc}
				1	&       &	 		&\\
				        &     \ddots&      		&\\
				        &       &      	1	&\\
				        &	&			& \left [\begin{array}{ccc}
				        					         & & \\
									         & B & \\
									         &   &
				        						\end{array} \right]
			\end{array}
		\right]. \]															
															
Next we want to show that $M$ satisfies the hypothesis as given in the theorem. In analogy with the examples, it suffices to verify that there exist $m+1$ non-null columns in every $m \times (k+1)$ minor in the matrix $B$, where $1 \leq m \leq k$. Indeed, suppose we have selected $k-m$ of the first $k-1$ rows in $M$, which provides us with $(k-m)$ non-null columns, then the rest $m$ rows coming from $B$ must give at least $m+1$ columns that are not equal to zero. There are three cases to consider. 

{\it Case 1}: If a $m \times (k+1)$ minor $C$ does not pick up the first and last rows of $B$, then the positions of the 1's in $C$ is shown as below: 
\[
\left[ \begin{array}{ccccccc}
	\underline{i_1, i_1-1} &  		&  i_1, i_1+1  &			& & &\\
				      &\underline{i_2, i_2-1} & 		     &i_2, i_2+1  & & &\\
				      &			&		     & 			& \ddots& &\\
				      &			&		     &			& \underline{i_m,i_m-1}  & &\underline{i_m, i_m+1}
		\end{array}
					\right],
\]
where $i_j, i_j \pm 1$ means that the $(i_j, i_j \pm 1)$-entry of $B$ has a 1 for $1 \leq j \leq m$. Since $i_1 -1 < i_2 -1 < \cdots < i_m -1 < i_m+1$, $C$ has at least $m+1$ nonzero columns as underlined in the above matrix.  

{\it Case 2}: Suppose $C$ includes exactly one of the first and last rows of $B$, without loss of generality, we may assume that the first row of $B$ is in $C$. Then the 1's in $C$ are positioned in the following entries: 
\[ \left[ \begin{array}{ccccccc}
		\underline{1, 1} &  	\underline{1,2}				&  & & &\\
		i_2, i_2-1 	      &			& 	\underline{i_2, i_2+1}  & & &\\
				      &			&	 			& \ddots& &\\
				      &			&				& i_m,i_m-1  & &\underline{i_m, i_m+1}
		\end{array}
					\right].
\] Here we can again find at least $m+1$ distinct non-null columns in $C$, which belong to the 1st, 2nd, ($i_2 + 1$)th, \ldots, ($i_m + 1$)th columns of $B$ as underlined above.

{\it Case 3}: Lastly, when $C$ includes both the top and bottom rows of $B$, the pattern for the positions of 1's in $C$ looks like: 
\[ \left[ \begin{array}{ccccccc}
				1, 1&  	1,2	&  		& & &\\
			i_2, i_2-1&                & i_2, i_2+1  & & &\\
				      &			&      		& \ddots& &\\
				      &			&		& i_{m-1},i_{m-1}-1  & &i_{m-1}, i_{m-1}+1 \\
				      &			&		&  &k+1, k&k+1, k+1
		\end{array}  \right].\]
The previous observations don't work here, because at the first glance there seems to be `only' $m$ columns which are not equal to zero. However, this is just an illusion, because in the above matrix, by the way we present it, $i_2$ must be 2 and $i_{m-1}$ must be $k$. Thus we need to choose our presentation more carefully. 

Suppose $j$ is the first integer starting from 2 such that $i_j \neq j$, then the previous index matrix should look like: 
\[ \left[ \begin{array}{ccccccccccc}
	\underline{1, 1}&  \underline{1,2}&  		& & &&&&&\\
			2, 1&               &\underline{2, 3}  & & &&&&&\\
			&			&      		& \ddots& & &&&&\\
			 &			&		&j-1, j-2 & &j-1, j& &&&\\
			&			&		&     &  &\underline{i_j, i_j-1}& &i_j, i_j+1 &&\\
		&			&		& &&&&\ddots && \\				&			&		&&&&&\underline{k+1, k}&\underline{k+1,k+1}&
		\end{array}
		\right].
\]
Since there are 1's locating at the entries:$(1,1), (1,2), (2,3), \ldots, (j-2, j-1)$; $(i_j, i_j-1), \ldots, (i_{m-1}, i_{m-1}-1)$; and $(k+1, k), (k+1, k+1)$ (as underlined in the above matrix). In total, the number of distinct columns with at least one nonzero entry is at least: 
\begin{equation*}
(j-1) + (m-j) + 2 = m+1,
\end{equation*}
as desired. 

Then we want to show $3k+1$ is the minimum number, that is, $\alpha(k, 2k)=3k+1$. Given any $n \times n$ matrix $A$ with 0's or 1's satisfying the hypothesis, we can rearrange the rows of $A$ so that the total sum of 1's in each row is in an increasing order. Suppose one of the first $k-1$ rows has a sum at least 3, then the total number of 1's in $A$ is at least
\begin{equation*}
3 + 3(k+1) = 3k+6,
\end{equation*}
which is greater than $3k+1$. 

On the other hand, if each of the first $k-1$ rows has no more than three 1's, let's look at the total number of 1's in the first $k-1$ rows, and denote it by $S$. When $S < k-1$, each of the rest rows must have at least 3 1's, because recall that every $k \times (2k+1)$ minor in $A$ should contain at least $k+1$ nonzero columns. It follows that the number of 1's in $A$ is at least
\begin{equation*}
3(k+1) = 3k+3,
\end{equation*}
which is also greater than $3k+1$.
When $S = k-1$, each of the rest rows must have a minimum of two 1's; then the number of 1's is at least 
\begin{equation*}
(k-1)+2(k+1) = 3k+1.
\end{equation*}
This is the same as the number of 1's in $M$. Lastly, when $S > k-1$, it requires one of the first $k-1$ rows to have a sum of 1's at least 2; therefore, the number of 1's is at least
\begin{equation}
k+2(k+1)=3k+2,
\end{equation}
which is greater than $3k+1$. Thus $3k+1$ is the minimum number of 1's one need to put in $A$ in order to satisfy the hypothesis.

For the uniqueness part of the theorem, we observe that if the number of 1's in $A$ happens to be exactly $3k+1$, then $A$ must have $k-1$ rows with only one 1, and the rest $k+1$ rows with just two 1's. Through permutation of rows and columns if necessary, we may assume that the rows with only one 1 are the first $k-1$ rows in $A$ and the 1's are on the diagonal. Moreover, the rest of 1's must lie in the lower right corner of $A$, otherwise if one of them is below a 1 in the first $k-1$ rows, then we would obtain a $k \times 2k$ minor with not enough non-null columns, whence a $k \times k$ minor with all entries being zero exists in $A$. This is impossible. Therefore, the nonzero entries of the remaining $k+1$ rows of $A$ are forced to stay inside the $(k+1)\times (k+1)$ block matrix $C$ as indicated below:
\[  A = \left[ \begin{array}{cccc}
				1	&       &	 		&\\
				        &     \ddots&      		&\\
				        &       &      	1	&\\
				        &	&			& \left [\begin{array}{ccc}
				        					         & & \\
									         & C & \\
									         &   &
				        						\end{array} \right]
			\end{array}
		\right]. \]				
This is very much like the matrix $M$ that we've seen before, except that we are not sure yet whether $C$ is the same as $B$.

We claim that $C$ is equal to $B$, that is, the rows with two 1's in $A$ can be arranged in the same way as in $B$ up to permutations of rows and columns. First, let's verify that every column of $C$ has at least one 1's. Suppose one column is null, then pick any $k$ rows of $C$, their 1's are confined in $k$ columns, hence we find more than one $k \times 2k$ minors with less than $k+1$ nonzero columns. This is a contradiction to our hypothesis for $A$. Second, if some column has only one 1, look at the $k$ rows of $C$ which do not include this 1, it follows that they have no more than $k$ nonzero columns, so we conclude that every column of $C$ needs at least two 1's. Since there are totally $2(k+1)$ 1's in $C$ which has $(k+1)$ columns, we deduce that each column of $C$ possesses precisely two 1's. Finally, we are ready to arrange the 1's of $C$ in such a way that they will look exactly the same as in $B$. 

Through appropriate permutations, we may put two 1's in the first two entries of the first row of $C$ (see the matrix below). Next, we work on the second row of $C$. Since the first column of $C$ has another 1, we can put the first 1 in the first entry of the second row. Then, the other 1 in the second row cannot be right below the second 1 in the first row, otherwise we get two identical rows which is a contradiction to the fact that every two rows of $C$ need to have at least three nonzero columns. Therefore, we can put the other 1 in the third entry of the second row. We repeat the argument for the third row so the first 1 in the third row is put in the second column and the second 1 is put in the fourth column. We continue this process until the second to the last row, using the fact that the 1's in every $i$ rows of $C$ need to span $i+1$ columns for each $i$ from 2 to $k$ in order for $A$ to satisfy the hypothesis in the theorem. Finally for the last row of $C$, the two 1's must be in the lower right corner of the matrix $C$. As a result, $C$ is unique and is equal to $B$ up to permutations as follows:
\[  C = \left[ \begin{array}{ccccc}
				1	&      1 &	 0 & \ldots		&0\\
				 1       &      0&       	1&\ldots	&0\\
				  0      &       1&      	0&\dots	&0\\
				  \vdots &\vdots	  &	\vdots	&\ddots	&\vdots\\
				   0    &	0&	0& 	1&	1
		\end{array}
		\right]. \]	
This completes the proof.			

\end{proof}

\subsection{The case when $n$ is odd and $ k = (n-1)/2$}
Our next task is to find out what happens when $k=(n-1)/2$. As we've remarked earlier, this case is not as easy as the previous case when $n$ is even and $k = n/2$. 

Before proceeding, let's look at the following table: 
\[ \begin{array}{|c|c|c|c|c|c|c|c|c|c|c|c|}
\hline
 k \times k = &1 \times 1 &2 \times 2 & 3 \times 3 & 4 \times 4 & 5 \times 5 & 6 \times 6 & 7 \times 7&8\times8&9\times9	&10\times10 &11 \times 11 \\ \hline
 n\times n=&		&		&			&		&		&			&		&		&	&	&\\
 1 \times 1&	1	&		&			&		&		&			&		&		&	&	&\\
 2 \times 2 &\bf	4	& 1		  & 			&		&		&			&		&		&	& 	&\\
3 \times 3 & 	9	&3		   & 1 		& 		& 		& 			& 		&		&	&	& \\
4 \times 4 &	16	&\bf7		 & 3			&1		&		&			&		&		& 	&	&\\
5 \times 5 & 	25	&13		& 5			& 3		& 1		&			&		&		& 	&	&\\
6 \times 6 & 	36	&		&\bf10		&5		& 3		&1			&		&		& 	&	&\\
7 \times 7 & 	49	&		& 16			&7		&5		&3			&1		&		& 	&	&\\ 	
8 \times 8 & 	64	&		&			&\bf13	&7		&5			&3		&1	 	& 	&	&\\
9 \times 9	&	81	&		&			& 20		&9		&7			&5		&3		&1	&	&\\
10\times10&	100	&		&			&		&\bf16	&9			&7		&5		&3	& 1 	&\\ 
11 \times 11&	121	&		&			&		&23		&11			&9		&7		&5	&3	&1 \\
\hline \end{array}					
\]

In this table, each entry in the first column is an arbitrary $n \times n$ matrix, and each entry in the first row is a $k \times k$ minor. Each number in the table indicates the least number of 1's that we need to put into an $n \times n$ matrix so that every $k\times k$ minor contains at least one entry of 1. For example, in the row that begins with $4 \times 4$, the first number 16 says that in order to eliminate any zero $1 \times 1$ minor in a $4 \times 4$ matrix, we need at minimum of sixteen 1's. Next, the second number 7 (in bold) tells us that we need at least seven 1's in a $4 \times 4$ matrix so that no $2\times2$ minor in it is zero. Then, the third number 3 tells us that at least three 1's are needed in order for a $4 \times 4$ matrix to have no trivial $3 \times 3$ minor. Finally, the last number in that row is 1, which says that only one 1 is sufficient to eliminate any zero $4 \times 4 $ minor in a $4 \times 4$ matrix. 

Previously, we used $\alpha(k,n)$ to denote the smallest number of 1's that an $n \times n$ matrix with entries 0 or 1 needs, in order to have no zero $k \times k$ minor. So when $n$ is even and $k = n/2$, we've proved that
\begin{equation*}
\alpha(k, 2k) = 3k+1. 
\end{equation*}
Observing the table, we see that these numbers: $\alpha(1,2) = 4$, $\alpha(2,4)=7$, $\alpha(3,6)=10$, $\alpha(4,8)=13$, $\alpha(5,10)=16$, \ldots (shown in bold in the table) are the very first numbers breaking the sequences of odd integers in each column. In some sense, they are the {\it first} numbers for which the formula $\alpha(k, n) = 2(n-k)+1$, where $n/2 < k \leq n$, stops to apply. Then, in the case that $n$ is odd and $k = (n-1)/2$, the numbers $\alpha(k, 2k+1)$, for example $\alpha(1,3)=9$, $\alpha(2,5)=13$, $\alpha(3,7)=16$, $\alpha(4,9)=20$, etc.\ lie directly below the numbers 4, 7, 10, 13, etc.\ , so they can be viewed as the {\it second} numbers breaking the pattern. The formula for the {\it second} numbers is harder to discover than for the {\it first} numbers. In fact, in the following, step-by-step trials are described during the search for a precise formula for $\alpha(k, 2k+1)$. Eventually, we find that it is `impossible' to write down an explicit one, at least using finitely many linear equations, because the candidate formulae converge to an asymptotic formula for $\alpha(k, 2k+1)$. 

{\bf Trial 1}: Following the nice pattern shown in Theorem~\ref{thm:n/2}, we naturally ask whether it can be generalized or not. First, we make an observation for the case when $k=2$. The minimum number $\alpha(2,5)$ can be easily calculated to be 13, and two examples are shown as follows:
\[ \begin{array}{cc}

 \left[ \begin{array}{ccccc}
1&0&0&0&0\\
0&1&1&1&0\\
0&1&1&0&1\\
0&1&0&1&1\\
0&0&1&1&1
\end{array} 
\right] &      \left[ \begin{array}{ccccc}
1&1&0&0&0\\
0&0&1&1&0\\
1&0&0&1&1\\
0&1&1&0&1\\
1&0&1&0&1
\end{array} 
\right].          \\
\end{array}
\]
Immediately it implies that uniqueness no longer holds. However, the matrix on the left looks very similar to the matrix $M$ in Theorem~\ref{thm:n/2}, because it has 1's on the diagonal and 3's in the remaining rows. Therefore, the following pattern seems to work:
\[ \left[ \begin{array}{ccc|ccccccc}
1&&&&&&&&&\\
&\ddots&&&&&&&&\\
&&1&&&&&&&\\ \hline
&&&1&1&1&&&&\\
&&&1&1&&1&&&\\
&&&1&&1&&1&&\\
&&&&&&&\ddots&&\\
&&&&&1&&1&&1\\
&&&&&&1&&1&1\\
&&&&&&&1&1&1
\end{array}
\right].
\]

We can show that the pattern does produce a matrix whose $k\times k$ minors are never zero, however it does not give the minimum number.

\begin{lem} \label{lem:4k}
Let $k$ be any positive integer and $n = 2k+1$. Given an $n \times n$ matrix $A$ with entries 0 or 1, suppose any $k \times k$ minor of $A$ contains at least one entry of 1, then the minimum number $\alpha(k, 2k+1)$ of 1's in $A$ is at most $4k+5$. 
\end{lem}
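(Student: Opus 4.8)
The plan is to prove the bound by exhibiting an explicit $(2k+1)\times(2k+1)$ matrix $M$ with exactly $4k+5$ ones and no zero $k\times k$ minor, which immediately yields $\alpha(k,2k+1)\le 4k+5$. The candidate is the matrix sketched in Trial~1: rows $1,\dots,k-1$ each carry a single $1$ on the diagonal, and the bottom-right $(k+2)\times(k+2)$ block $B$ carries three $1$'s in every row, arranged in the staircase shown in the picture — the first row of $B$ has its $1$'s in columns $1,2,3$, the last row of $B$ has its $1$'s in columns $k,k+1,k+2$, and each intermediate row shifts its three-element support one step, so that $B$ is a three-$1$'s analogue of the block $B$ from Theorem~\ref{thm:n/2}. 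Counting gives $(k-1)+3(k+2)=4k+5$ ones, so the whole task reduces to checking the minor condition for $M$.

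For the verification I would first reduce to a statement about $B$ alone. A $k\times k$ minor of an $n\times n$ matrix is the zero matrix exactly when some $k$ rows have their combined nonzero columns confined to at most $n-k$ columns; with $n=2k+1$ this is at most $k+1$ columns, so $M$ has no zero $k\times k$ minor iff every $k$ rows of $M$ span at least $k+2$ nonzero columns. If such a choice of rows uses $j$ of the diagonal rows — contributing $j$ distinct columns among the first $k-1$ — together with $m=k-j$ rows of $B$ — whose nonzero columns lie among the last $k+2$ and are disjoint from the first batch — it suffices to prove: \emph{every $m$ rows of $B$ span at least $m+2$ columns, for $1\le m\le k$.} Since $B$ has only $k+2$ rows, $m$ never reaches $k+1$, which is precisely where a bound of this shape would be forced to fail.

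To prove that span statement I would follow the case analysis of Theorem~\ref{thm:n/2}: split according to whether the chosen $m$ rows of $B$ contain neither, exactly one, or both of the special first and last rows of $B$, and in each case read off $m+2$ distinct columns carrying a $1$ directly from the staircase, using that each middle row of $B$ adds a new column "to the left" (respectively "to the right") of those already listed. The only delicate case, just as in Theorem~\ref{thm:n/2}, is when both the top and bottom rows of $B$ are selected: a direct count seems to produce only $m+1$ columns, and one repairs it by locating the first index at which the chosen middle rows stop being consecutive and re-listing the witnessing columns around that break. The extra (third) $1$ in each row of $B$, compared with the two $1$'s per row in Theorem~\ref{thm:n/2}, is exactly what upgrades the bound from $m+1$ to $m+2$. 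I expect this "both extreme rows" case to be the main obstacle; everything else is bookkeeping. Alternatively, one can avoid the case analysis altogether by taking $B$ to be the circulant with $b_{ij}=1$ iff $j-i\bmod(k+2)\in\{0,1,2\}$, for which the span statement becomes the elementary fact that $|A+\{0,1,2\}|\ge|A|+2$ in $\mathbb{Z}/(k+2)$ whenever $|A|\le k$; this also has $3(k+2)$ ones and hence gives the same count $4k+5$.
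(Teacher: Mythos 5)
Your overall strategy is the same as the paper's: take the matrix with $k-1$ diagonal $1$'s in the upper-left corner and a $(k+2)\times(k+2)$ block with three $1$'s per row in the lower-right corner, count $(k-1)+3(k+2)=4k+5$, and reduce the minor condition to the statement that every $m\le k$ rows of the block occupy at least $m+2$ columns. That reduction (no zero $k\times k$ minor iff every $k$ rows of the $(2k+1)\times(2k+1)$ matrix have at least $k+2$ nonzero columns, and the diagonal rows contribute disjointly) is exactly what the paper does. However, your primary description of the block is not workable as stated: a width-$3$ window shifted by one per row starting at $\{1,2,3\}$ reaches $\{k,k+1,k+2\}$ after only $k$ rows, not $k+2$, so that "staircase" cannot fill a $(k+2)\times(k+2)$ block with three $1$'s in every row. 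It also does not match the paper's block $C$, whose middle rows place their $1$'s at columns $j-2$, $j$, $j+2$ and whose top two and bottom two rows are special; the paper then verifies the span condition by a six-case analysis on which of those four special rows are selected. What saves your proposal is the circulant alternative: taking $b_{ij}=1$ iff $j-i\bmod(k+2)\in\{0,1,2\}$ gives $3(k+2)$ ones, and the span condition becomes the elementary sumset fact that $|A+\{0,1,2\}|\ge|A|+2$ in $\mathbb{Z}/(k+2)$ for $1\le|A|\le k$ (apply twice the observation that $|A+\{0,1\}|\ge|A|+1$ for any nonempty proper subset, and note that if $A+\{0,1\}$ is already everything then so is $A+\{0,1,2\}$, which still has $\ge|A|+2$ elements). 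This is correct, complete, and genuinely cleaner than the paper's case analysis; I would drop the staircase description entirely and present only the circulant.
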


\begin{proof}
Let's give a more precise definition of the above matrix. Denote the matrix by $N$ and assume that $k\geq 2$ (the case when $k=1$ is obvious), then the first $k-1$ diagonal entries of $N$ are 1. What remains in $N$ is a $(k+2) \times (k+2)$ block matrix, let's call it $C$. The first and second rows of $C$ are shown as above, as well as the last and the second to last rows. For each row in between, there are three 1's with one of them on the diagonal and the other two staying one entry away from the diagonal. That is to say, 
\[ \begin{array}{c}
c_{11} = c_{12} = c_{13}=1 \nonumber\\
c_{21} = c_{22} = c_{24}=1 \nonumber\\
c_{k+1, k-1} = c_{k+1, k+1} = c_{k+1, k+2} = 1 \nonumber\\
c_{k+2, k} = c_{k+2,k+1}= c_{k+2, k+2}=1 \nonumber\\
c_{j, j-2}=c_{j,j} = c_{j, j+2}=1, \text{ for } 3 \leq j \leq k, \nonumber
\end{array}  \]
and  the other entries of $N$ are zero. Moreover, 
\[ N = \left[ \begin{array}{cccc}
				1	&       &	 		&\\
				        &     \ddots&      		&\\
				        &       &      	1	&\\
				        &	&			& \left [\begin{array}{ccc}
				        					         & & \\
									         & C & \\
									         &   &
				        						\end{array} \right]
			\end{array}
		\right], \]	
which has totally $4k+5$ 1's. 

In analogy with the proof for Theorem~\ref{thm:n/2}, it suffices to show that every $m$ rows of $C$ have at least $m+2$ nonzero columns. There are six cases to consider here.

{\it Case 1}: If a $m \times (k+2)$ minor $D$ contains only rows in the middle, that is, row 3, \ldots, row $k$, then the positions of the 1's in $D$ are illustrated as follows:
\[
\left[ \begin{array}{ccccccccc}
	\underline{i_1, i_1-2} &    		&\underline{i_1, i_1}	&  		     &	i_1, i_1+2  &			& & &\\
				      &i_2, i_2-2 &                  &\underline{i_2, i_2}	&			     &i_2, i_2+2  & & &\\
				      &			&		     & 			& \ddots& 			&		&&\\
				      &			&		     &			& i_m,i_m-2  &		&\underline{i_m, i_m} &	&\underline{i_m, i_m+2}
		\end{array}
					\right],
\]
where the underlined entries give $m+2$ nonzero columns in $D$. 

{\it Case 2}: $D$ has exactly one row from either the top or the bottom two rows of $C$. Without loss of generality, we may assume that it is from the top two rows. It follows that the 1's in $D$ are also located in at least $m+2$ different columns as indicated below:
\[
\left[ \begin{array}{cccccccc}
				\underline{1,1}	&\underline{1,2}	&		1,3&	&		&&&\\		
				 (   \underline{2,1}  &\underline{2,2}	&  		     &	2,4) &			& & & \\
				      i_2, i_2-2 &                  &\underline{i_2, i_2}	&			     &i_2, i_2+2  & & &\\
				      			&		     & 			& \ddots& 			&		&&\\
				      			&		     &			& i_m,i_m-2  &		&\underline{i_m, i_m} &	&\underline{i_m, i_m+2}
		\end{array}
					\right].
\]
Here are two separate subcases, and we've included the second row in parentheses. Each subcase gives at least $m+2$ nonzero columns.

{\it Case 3}: $D$ contains the first (or the bottom) two rows. Then the positions of 1's in $D$ are shown as follows:
\[
\left[ \begin{array}{cccccccc}
				\underline{1,1}	&\underline{1,2}	&\underline{1,3}&	&		&&&\\		
				  2,1  &2,2	&  		     &\underline{	2,4 }&			& & & \\
				      i_3, i_3-2 &                  &i_3, i_3	&			     &\underline{i_3, i_3+2}  & & &\\
				      			&		     & 			& \ddots& 			&		&&\\
				      			&		     &			& i_m,i_m-2  &		&i_m, i_m &	&\underline{i_m, i_m+2}
		\end{array}
					\right],
\]
which gives at least $m+2$ nonzero columns in $D$. 	
		
{\it Case 4}: $D$ contains exactly one row from each of the first and last two rows of $C$. This is similar to case 2, which is shown below.
\[
\left[ \begin{array}{cccccccc}
\underline{1,1}	&\underline{1,2}	&		1,3&	&		&&&\\		
( \underline{2,1}  &\underline{2,2}	&  		     &	2,4) &			& & & \\
 &                  &\underline{i_2, i_2}	&			     & & & &\\
&		     & 			& \ddots& 			&		&&\\
&		     &			&   &		&\underline{i_{m-1}, i_{m-1} }&	&\\
&			&		&				&(k+1,k-1	&					&\underline{k+1, k+1}	&k+1, k+2)\\
&			&		&				&		&		k+2, k		&		k+2, k+1		&\underline{k+2, k+2	}
\end{array} \right].
\]
Here are four separate subcases, but each gives at least $m+2$ non-null columns.

{\it Case 5}: $D$ contains the first two rows and exactly one row from the bottom two rows (or vice versa). Suppose $j$ is the first integer starting from 3 such that $i_j \neq j$, then the index matrix becomes: 
\[ \left[ \begin{array}{cccccccccc}
\underline{1,1} & \underline{1,2}& \underline{1,3}&&&&&&& \\
&&&\ddots&&&&&&\\
&&&&\underline{j-1,j+1}&&&&& \\
&&&&i_j, i_j			&&&&&\\
&&&&&\underline{i_{j+1}, i_{j+1}}&&&&\\
&&&&&&\ddots&&&\\
&&&&&&&\underline{i_{m-1}, i_{m-1}}&&\\
&&&&&&&&(\underline{k+1,k+1}&\underline{k+1, k+2})\\\
&&&&&&&&\underline{k+2, k+1}&\underline{k+2,k+2}
\end{array}
\right], 
\]
whose underlined indices provide $m+2$ nonzero columns.

{\it Case 6}: $D$ contains both the top and the bottom two rows. Suppose $j$ is the first integer such that $i_j \neq j$. On the one hand, if $i_j \geq j+2$, then the positions of 1's in $D$ are in at least $m+2$ different columns shown as follows:
\[ \left[ \begin{array}{cccccccccc}
\underline{1,1} & \underline{1,2}& \underline{1,3}&&&&&&& \\
&&&\ddots&&&&&&\\
&&&&\underline{j-1,j+1}&&&&& \\
&&&&&\underline{i_j, i_j}&&&&\\
&&&&&&\ddots&&&\\
&&&&&&&\underline{i_{m-1}, i_{m-1}}&&\\
&&&&&&&&\underline{k+1,k+1}&\\\
&&&&&&&&k+2, k+1&\underline{k+2,k+2}
\end{array}
\right].
\]
On the other hand, suppose $i_j=j+1$, then there must be another gap between $i_{j+1}$ and $i_{m-1}$, since we select $m \leq k$ rows out of $k+2$ rows. Suppose $p$ is the next integer starting from $j$ such that $i_p \neq p+1$, then we obtain the following matrix:
\[ \left[ \begin{array}{ccccccccc}
\ddots&&&&&&&&\\
&\underline{j-1,j+1}&&&&&&& \\
&&&\underline{j+1,j+3}&&&&&\\
&&\underline{j+2, j+2}&&&&&& \\
&&&\ddots&&&&&\\
&&&&\underline{p, p+2}&&&&\\
&&&&i_p, i_p&&&&\\
&&&&&\underline{i_{p+1}, i_{p+1}}&&&\\
&&&&&&\ddots&&\\
&&&&&&&\underline{k+1,k+1}&\\
&&&&&&&&\underline{k+2,k+2}
\end{array}
\right],
\]
where we've omitted the first $j-2$ rows because they are the same as before. Again there are at least $m+2$ nonzero columns, thus completing the proof.  	
\end{proof}

The formula $4k+5$ starts failing at $k=3$ because $\alpha(3, 7) = 16 < 17$ based on the following example, which is in fact unique up to permutations: 
\[\left[
\begin{array}{c|cc|cc|cc}
1&&&&&&\\ \hline
&1&1&&&&\\
&&&1&1&&\\
&&&&&1&1\\ \hline
&1&&&1&&1\\
&&1&1&&&1\\
&&1&&1&1&
\end{array}
\right].
\]
Moreover, the formula $4k+5$ also fails at $k=4$ because $\alpha(4,9)=20 < 21$ based on the following two examples, which imply that uniqueness no longer holds:
\[\begin{array}{cc}
\left[ \begin{array}{cc|cc|cc|cc|c}
1&&&&&&&&\\
&1&&&&&&&\\ \hline
&&1&1&&&&&\\
&&&&1&1&&&\\
&&&&&&1&1&\\ \hline
&&1&&&1&&1&\\
&&&1&1&&{\bf1}&&\\
&&&1&&{\bf1}&&&1\\
&&&&{\bf1}&&&1&1
\end{array}
\right], & \left[\begin{array}{c|ccc|ccc|cc}
1&&&&&&&&\\ \hline
&1&1&&&&&&\\
&&1&1&&&&&\\
&&&&1&1&&&\\
&&&&&1&1&&\\
&&&&&&&1&1\\ \hline
&&1&&1&&&1&\\
&&&1&&&1&1&\\
&1&&&&1&&&1
\end{array}\right].
\end{array}
\]

{\bf Trial 2}: Examining the first few examples of $\alpha(k, 2k+1)$ in the table, we observe that when $k$ is odd, the minimum numbers are 9, 16, 23, and when $k$ is even, the minimum numbers are 13, 20. We conjecture that the formula for $\alpha(k, 2k+1)$ is equal to:
\[ \alpha(k, 2k+1) = \left\{ \begin{array}{ll}
					\frac{7k+11}{2} & \mbox{if $k$ is odd}\\
					\frac{7k+12}{2} & \mbox{if $k$ is even}
					\end{array}
\right.
\]
However, like $4k+5$, this is proved to be only an upper bound for $\alpha(k, 2k+1)$ as shown in the following lemma.

\begin{lem} \label{lem:7k/2}
Let $k$ be any positive integer and $n = 2k+1$. Given an $n \times n$ matrix $A$ with entries 0 or 1, suppose any $k \times k$ minor of $A$ contains at least one entry of 1, then the minimum number $\alpha(k, 2k+1)$ of 1's in $A$ is at most
\[\left\{ \begin{array}{cc}
\frac{(7k+11)}{2} & \mbox{if $k$ is odd} \\
\frac{(7k+12)}{2} & \mbox{if $k$ is even.}
\end{array}
\right.
\]
\end{lem}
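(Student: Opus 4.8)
The plan is to mimic the construction strategy from Lemma~\ref{lem:4k}: build an explicit $n \times n$ matrix $N$ with entries $0$ or $1$, having no zero $k \times k$ minor, whose number of $1$'s equals the claimed bound. The overall shape of $N$ should again be block-diagonal: a string of $1$'s occupying the first $k-1$ (or so) diagonal entries of $N$, each contributing its own nonzero column, followed by a small square block $C$ in the lower-right corner that carries the rest of the $1$'s. As in the earlier lemma, the point of the isolated diagonal $1$'s is that if a $k \times k$ (equivalently $k \times (k+1+\cdots)$) minor uses $t$ of those diagonal rows it already has $t$ guaranteed nonzero columns, so the remaining $m = k - t$ rows of the minor, all coming from $C$, only need to span $m+1$ columns. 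So the whole verification reduces to a combinatorial statement about the block $C$: \emph{every $m$ rows of $C$ occupy at least $m+1$ distinct nonzero columns}, for $1 \le m \le (\text{number of rows of }C) - 1$.

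First I would pin down the exact block design. Since the target count differs by parity of $k$, I expect $C$ to be a near-tridiagonal-type band matrix of size roughly $(k/2 + \text{const}) \times (k/2 + \text{const})$ with two or three $1$'s per row, arranged so that consecutive rows ``shift'' their support by one column — exactly the device that made Cases~1--6 of Lemma~\ref{lem:4k} work. The parity split should come from whether the band length divides evenly; in the even-$k$ case one extra row/column (hence one extra $1$) is forced, accounting for the $+12$ versus $+11$. I would write down the nonzero positions $c_{ij}$ by explicit formulas (as the paper does for $B$ and for $C$ in Lemma~\ref{lem:4k}), then tabulate the total: (number of isolated diagonal $1$'s) $+$ (number of $1$'s in $C$), and check it equals $\tfrac{7k+11}{2}$ or $\tfrac{7k+12}{2}$ accordingly.

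Second, I would prove the span condition for $C$ by the same case analysis used in Lemma~\ref{lem:4k}: split on whether the selected $m$ rows include the top special row(s), the bottom special row(s), both, or neither, and within the ``both'' case introduce the first index $j$ where the chosen rows stop being consecutive, then read off $m+1$ underlined columns from the index picture. The bookkeeping is the routine part; the argument is structurally identical to what has already been carried out.

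The main obstacle I anticipate is \emph{choosing the block $C$ correctly} so that the $m+1$-span property holds for \emph{every} $m$ while keeping the $1$-count down to the stated value — in the earlier lemmas the tridiagonal/pentadiagonal patterns were essentially forced, but here the statement only claims an upper bound (indeed the paper has just flagged that the true value is smaller), so there is genuine freedom and the danger is building a $C$ that is slightly too heavy or that fails the span condition for one boundary value of $m$. A secondary nuisance is the parity-dependent edge of the band: I would handle odd and even $k$ in parallel columns (as Lemma~\ref{lem:7k/2}'s statement already does), verifying the endpoint rows of $C$ separately in each parity. I would not attempt to show the bound is tight — the surrounding discussion makes clear it is not, and the lemma only asserts ``at most.''
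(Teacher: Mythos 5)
There is a genuine gap: the reduction and the architecture you propose cannot deliver the claimed count. First, the span threshold is off by one. For $n=2k+1$, a zero $k\times k$ minor exists precisely when some $k$ rows have support of size at most $n-k=k+1$, so every $k$ rows must span at least $k+2$ nonzero columns; hence if $t$ of the chosen rows are isolated diagonal rows and $m=k-t$ come from the corner block $C$, those $m$ rows must span $m+2$ columns, not $m+1$ (this is exactly the condition verified in Lemma~\ref{lem:4k}; the $m+1$ condition belongs to the even case $n=2k$ of Theorem~\ref{thm:n/2}). Second, and more fatally, with $k-1$ isolated diagonal $1$'s and all remaining $1$'s confined to a lower-right block $C$ of size $(k+2)\times(k+2)$, taking $t=k-1$ and $m=1$ forces every row of $C$ to contain at least three $1$'s, so the total is at least $(k-1)+3(k+2)=4k+5$, which exceeds $\tfrac{7k+11}{2}$ for every $k\ge 2$. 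In other words, the ``diagonal $1$'s plus a single banded block with two or three $1$'s per row'' template is exactly the $4k+5$ construction and cannot be squeezed down to the stated bound; your alternative size estimate for $C$ (about $k/2$ rows) is also inconsistent, since the complementary $\sim 3k/2$ single-$1$ rows would by themselves contain $k$ rows spanning only $k$ columns.

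The paper's construction avoids this by using a three-tier matrix $L$: only $k-2$ isolated diagonal $1$'s, then $(k+3)/2$ rows with exactly two adjacent $1$'s arranged in a disjoint staircase, then $(k+3)/2$ rows with three $1$'s placed underneath the pairs in the pattern $\left(\begin{smallmatrix}1&\\&1\\&1\end{smallmatrix}\right)$, for a total of $(k-2)+(k+3)+\tfrac{3(k+3)}{2}=\tfrac{7k+11}{2}$ when $k$ is odd (with an adjusted, intertwined tail giving $\tfrac{7k+12}{2}$ when $k$ is even). Because only $k-2$ rows are singletons, a selection of $k$ rows can never isolate a single two-$1$ row against a full diagonal, which is what permits rows with fewer than three $1$'s. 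The verification is correspondingly more delicate than a band-matrix span check: one inducts on the number $l$ of triple rows selected, shows the $1$'s in those $l$ rows occupy at least $2l+1$ columns, and then counts how many pair rows can fail to contribute a new column. If you want to complete your proof you would need to redesign the example along these lines (or find another configuration meeting the count) and carry out that finer counting; the routine case analysis you sketch for a banded $C$ will not arise, because no such $C$ with the required weight exists.
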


\begin{proof}
First, let's assume that $k$ is an odd integer. Based on the above example for $k=3$, we generalize it the following matrix $L$ of 0's and 1's:
\[ L = \left[ \begin{array}{ccc|cc|cc|cc|cc}
	1&&&&&&&&&&\\
	&\ddots&&&&&&&&&\\
	&&1&&&&&&&&\\ \hline
	&&&1&1&&&&&&\\
	&&&&&1&1&&&&\\
	&&&&&&&\ddots&&&\\
	&&&&&&&&&1&1\\ \hline
	&&&{\bf1}&&&&&&&1\\
	&&&&1&{\bf1}&&&&&1\\
	&&&&1&&1&\vdots&&&\\
	&&&&&&1&&&{\bf1}&
	\end{array}
\right].
\]

More precisely, the first $k-2$ rows of $L$ consist of diagonal entries of 1. For the next $(k+3)/2$ rows, each of them has two adjacent 1's that are to the right of the two 1's in the preceding row.  Then for the last $(k+3)/2$ rows, each has three 1's, and they are arranged as follows:
\begin{enumerate}
\item There are three 1's underneath every pair of adjacent 1's in the rows with two 1's. 
\item The three 1's are always in the pattern of $\left(\begin{array}{cc} 1 & \\ & 1\\ & 1 \end{array}\right)$, with one 1 on the left and two 1's on the right which are one entry behind the 1 on the left. If there is not enough room on the right, start over to the first row with three 1's (see the example (on the left) when $k=4$).
\item The 1's on the left side in each triple form a diagonal pattern (shown in bold in the matrix $L$). 
\end{enumerate}

Let's verify that every $k \times (2k+1)$ minor in $L$ has at least $k+2$ nonzero columns. First if we only select rows with one or two 1's, say $m$ from the first $k-2$ rows and $k-m$ from the next $(k+3)/2$ rows, where $ m \leq k-2$, then there are at least $m + 2(k-m) \geq 2k -(k-2) = k+2$ columns that are nontrivial.

Next if we select one row with three 1's, since $3 + (k-1) = k+2$, it suffices to check that the 1's in each row of the first $(k-2) + (k+3)/2$ rows provide at least one new column, besides the columns where the three 1's belong to. It is obviously true for the first $k-2$ rows. For each pair of 1's in the next $(k+3)/2$ rows, since the three 1's beneath them come from three different rows, this is also true. Then we proceed by induction on the number $l$ of rows with three 1's in a $k \times (2k+1)$ minor. Here we need to look at three cases.

{\it Case 1}: When $l = (k+3)/2$, we include all rows with three 1's that distribute over $k+3$ distinct columns, which is more than $k+2$. 

{\it Case 2}: When $l=(k+3)/2 -1$, we include all rows with three 1's except for one. According to case 1, it suffices to show that deleting one row only results in a loss of one column, that is, the 1's in the  rest $l$ rows are over $k+2$ distinct columns. This is true by our construction, because each row has only one 1 that is on the left-hand side while the other two have another 1 beneath them.

{\it Case 3}: When $l \leq (k+3)/2-2$, first we can prove by induction that the 1's in these $l$ rows distribute over at least $2l+1$ columns. Suppose not, at least $l$ 1's are above another 1, this is only possible for the first 1 on the right in the pattern of $\left(\begin{array}{cc} 1 & \\ & 1\\ & 1 \end{array}\right)$. Since each row has one and only one such 1, it follows that the $l$ rows must be consecutive. If we look at the last one of them, either it is not the bottom row which implies there is another 1 below it, or it is the bottom one which implies there is another 1 above it. In either case, an additional row is needed because we assume that $l \leq (k+3)/2-2$. 

Second, if the 1's in the $l$ rows span over $2l+i$ columns, for $1 \leq i \leq l$, we do the calculation: $2l+i +(k-l) = (k+2) +(l+i-2)$. It is sufficient to prove that every one of the first $(k-2)+(k+3)/2$ rows supplies at least one new column that is not one of the columns where the 1's in the $l$ rows belong to, with at most $l+i-2$ exceptions. Since each 1 in the first $k-2$ rows gives a new column, let us focus on the pairs of  adjacent 1's in the middle rows. Suppose for the sake of contradiction, there are $l+i-1$ pairs lying over the $l$ rows. 

When $i=1$, It follows that in the triple of $\left(\begin{array}{cc} 1 & \\ & 1\\ & 1 \end{array}\right)$ under each pair, one must include the 1 on the left-hand side and one of the two 1's on the right-hand side. Since each pair needs one 1 to the left and there are $l$ of them, the $l$ rows with three 1's are thus determined. Furthermore, we observe that in the row of each 1 that is on the left-hand side of $\left(\begin{array}{cc} 1 & \\ & 1\\ & 1 \end{array}\right)$, there are two additional 1's to its left except for the first and second rows in the last $(k+3)/2$ rows. Since the $l$ pairs of 1's already span over $2l$ columns and the $l$ rows distribute over only $2l+1$ columns, the first one of them must start with the first or second one of the last $(k+3)/2$ rows, and the rest of them need to be consecutive in order to avoid introducing any new column. However, the last one of them requires an additional row beneath it in order to fit in an extra 1 to the right, thus reaching a contradiction. When $i \geq 2$, the proof is analogous and we leave it as an exercise. Thus we complete the proof for the case when $k$ is an odd integer. 

Next, if $k$ is an even integer, we adjust the above matrix $L$ to a new matrix $L'$ in the following way. The first $k-2$ rows of $L'$ are the same as $L$ which consist of diagonal entries of 1. Likewise for the next $(k+2)/2$ rows of $L'$, each of them has two adjacent 1's that are to the right of the two 1's in the preceding row. However, unlike $L$ the last column of $L'$ is still an empty column so far. Then for the remaining $(k+2)/2+1$ rows, each has three 1's, and they are arranged in the same way as in $L$, except for the last three triples of $\left(\begin{array}{cc} 1 & \\ & 1\\ & 1 \end{array}\right)$. One example (on the left) is shown before the lemma for $k=4$. The three 1's shown in bold serve as the 1's to the left in the last three triples. They are intertwined together. Indeed, if we look from right to left, the first triple is made of the last column and the first bold 1 to the right. The second triple is made of the second to last column and the second bold 1 to the right. What's remaining is for the third triple. In general, the rows with three 1's are arranged as follows:
\[L'=\left[ \begin{array}{c|cc|cc|c|cc|cc|cc|c}
\ddots&&&&&&&&&&&&\\ \hline
&1&1&&&&&&&&&&\\
&&&1&1&&&&&&&&\\
&&&&&\ddots&&&&&&&\\
&&&&&&1&1&&&&&\\
&&&&&&&&1&1&&&\\
&&&&&&&&&&1&1&\\ \hline
&{\bf1}&&&&&&&&1&&1&\\
&&1&&&&&&1&&&&\\
&&1&{\bf1}&&&&&&&&&\\
&&&& 1&\ddots&&&\vdots&\vdots&&\vdots&\\
&&&& 1&&&&&&&&\\
&&&&   &&{\bf1}&&&&{\bf1}&&\\
&&&&   &&  &1&&{\bf1}&&&1\\
&&&&   &&  &1&{\bf1}&&&	1&1	
\end{array}
\right].
\]
We omit the proof, because it is similar as before. 
\end{proof}

The formula $(7k+11)/2$ or $(7k+12)/2$ still fails to formulate the minimum number $\alpha(k, 2k+1)$. The first counterexample is discovered for $k=19$ where we can find a matrix $A$ with $71$ 1's, which is less than 72 as predicted by the formula. The counterexample is a $39 \times 39$ matrix \[A=\left[\begin{array}{c|c}
			P_1 & P_2 \\ \hline
			0	& P_3
			\end{array}\right],\] whose the three block matrices $P_1$, $P_2$, and $P_3$ are shown as below:
\[\begin{array}{c}
P_1=\left[ \begin{array}{r|c}
\overbrace{\begin{array}{ccc} 1&&\\&\ddots&\\&&1 \end{array}}^{15} & \\ \hline
1&1\\
\end{array}
\right],
\end{array}
\]
\[\begin{array}{c}
P_2=\overbrace{\left[\begin{array}{ccc|ccc|ccc|cc}
1&1&&&&&&&&&\\
&1&1&&&&&&&\\
&&&1&1&&&&&&\\
&&&&1&1&&&&&\\
&&&&&\ddots&&&&&\\
&&&&&&1&1&&&\\
&&&&&&&1&1&&\\
&&&&&&&&&1&1\\
\end{array}\right]}^8, 
\end{array}
\]
\[\begin{array}{c}
 P_3 =\overbrace{ \left[ \begin{array}{c|c|c|c|c|c|c|c}
*&*&*&&&&&\\
*&*&&*&&&&\\
*&&*&&*&&&\\
&*&&*&&*&&\\
&&*&&*&&*&\\
&&&*&&*&&*\\
&&&&*&&*&*\\
&&&&&*&*&*
\end{array}
\right]}^8.
\end{array}
\]

In $P_1$ there is a $15 \times 15$ identity matrix followed by a row of two 1's in which the first 1 is right below the last 1 in the identity matrix. In $P_2$, we divide the columns into 8 groups with the first seven being $\left(\begin{array}{ccc}1 & 1& \\ & 1 & 1\end{array} \right)$ and the eighth being $(\begin{array}{cc} 1&1 \end{array})$. The groups don't stack over each other.  The matrix $P_3$ is only schematic. Each * represents a 1, so every row has three 1's. However, each column in $P_3$ actually corresponds to three (or two) columns of a group in $P_2$, so $P_3$ only tells you how to put three 1's below each group in $P_2$. In fact, for each of the first seven columns, the three * span over three distinct columns, and in the last column, they span over two columns. One can check that every $19 \times 19$ minor of $A$ has at least one entry of 1. 

Analogously, a second counterexample is discovered for $k=20$ where where we can find a $41 \times 41$ matrix with 75 1's, which is less than 76 as predicted by the formula. The example is similar to  the one for $k=19$, where $P_3$ is the same and $P_1$ and $P_2$ are illustrated as follows:
\[\begin{array}{cc}
P_1=\left[ \begin{array}{r|c}
\overbrace{\begin{array}{ccc} 1&&\\&\ddots&\\&&1 \end{array}}^{16} & \\ \hline
1&1\\
\end{array}
\right], &
P_2=\overbrace{\left[\begin{array}{ccc|ccc|ccc}
1&1&&&&&&&\\
&1&1&&&&&\\
&&&1&1&&&&\\
&&&&1&1&&&\\
&&&&&\ddots&&&\\
&&&&&&1&1&\\
&&&&&&&1&1\\
\end{array}\right]}^8. 
\end{array} 
\]

Moreover, a third counterexample is discovered for $k=21$ where we find a $43 \times 43$ matrix with 78 1's, which is less than 79 as predicted by the formula. The example is also similar as before, where $P_3$ is the same and $P_1$, $P_2$ are illustrated as follows:  
\[\begin{array}{cc}
P_1=\overbrace{\left[ \begin{array}{r}
\begin{array}{ccc} 1&&\\&\ddots&\\&&1 \end{array} 
\end{array}
\right]}^{17}, &
P_2=\overbrace{\left[\begin{array}{ccc|ccc|ccc|cc}
1&1&&&&&&&&&\\
&1&1&&&&&&&\\
&&&1&1&&&&&&\\
&&&&1&1&&&&&\\
&&&&&\ddots&&&&&\\
&&&&&&1&1&&&\\
&&&&&&&1&1&&\\
&&&&&&&&&1&1
\end{array}\right]}^9.
\end{array}
\]

More generally, we show that the following statement is true.
\begin{lem} \label{lem:10k/3}
Let $k$ be any positive integer and $n = 2k+1$. Given an $n \times n$ matrix $A$ with entries 0 or 1, suppose any $k \times k$ minor of $A$ contains at least one entry of 1, then the minimum number $\alpha(k, 2k+1)$ of 1's in $A$ is at most
\[\left\{ \begin{array}{cc}
		\frac{10k+24}{3} & \mbox{if $k \equiv 0$ (mod 3)}\\
		\frac{10k+23}{3} & \mbox{if $k \equiv 1$ (mod 3)}\\
		\frac{10k+25}{3} & \mbox{if $k \equiv 2$ (mod 3)}
	\end{array}
\right.
\]
\end{lem}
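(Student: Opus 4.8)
The plan is to prove this bound constructively, exhibiting for each $k$ (large enough for the construction to be non‑degenerate; the finitely many small cases being handled directly) an explicit $(2k+1)\times(2k+1)$ matrix $A$ with entries $0,1$ that has no zero $k\times k$ minor and whose number of $1$'s is exactly the stated quantity. As in every previous estimate of this section, the verification rests on the fact that an $n\times n$ matrix has no zero $k\times k$ minor if and only if every set of $k$ rows spans at least $n-k+1$ nonzero columns; here $n=2k+1$, so the target is $k+2$ nonzero columns per $k$ rows.

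First I would define $A$ precisely, generalizing the three examples displayed above for $k=19,20,21$. It has the block shape $\left[\begin{smallmatrix}P_1 & P_2\\ 0 & P_3\end{smallmatrix}\right]$, where $P_1$ is essentially an identity block; $P_2$ is a side‑by‑side, non‑overlapping juxtaposition of $g-1$ copies of the staircase $\left(\begin{smallmatrix}1&1&0\\0&1&1\end{smallmatrix}\right)$ together with one final $(1\ 1)$, the ``groups''; and $P_3$ puts three $1$'s into the columns of each group, those $1$'s belonging to three distinct rows of $P_3$, each row of $P_3$ carrying exactly three $1$'s spread over three different groups, with the last few rows intertwined as in the displayed pattern. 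The number $g$ and the exact shape of $P_1$ (a pure identity, or an identity with one extra $(1\ 1)$ row; final group of two columns or of three) are pinned down by the residue of $k$ modulo $3$ so that the column count is $2k+1$; with those choices, summing the $1$'s of $P_1$, $P_2$, $P_3$ reproduces, case by case, the three values $\tfrac{10k+24}{3}$, $\tfrac{10k+23}{3}$, $\tfrac{10k+25}{3}$. Pinning down $g$ and $P_1$ in each residue class and checking this arithmetic is the routine part.

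The substantive step is the column‑span estimate. The rows of $A$ come in three types --- \emph{identity rows} (from $P_1$), \emph{staircase rows} (from $P_2$), \emph{triple rows} (from $P_3$) --- and if a choice of $k$ rows uses $i$ identity rows, $j$ staircase rows and $l$ triple rows, the $i$ identity rows already contribute $i$ distinct columns, so it suffices to show that any $j$ staircase rows together with any $l$ triple rows span at least $j+l+2$ columns. This I would check group by group: inside a single group, $u$ chosen staircase rows and $v$ chosen triple rows cover a number of the group's columns that can be read off from the placement of the $1$'s, and the construction is arranged so that there is a ``surplus'' of at least one column for every group touched at all, and of two as soon as the chosen rows straddle two groups or involve the first or last (intertwined) triples. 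When the rows spread over three or more groups the estimate is immediate; the delicate case --- exactly as in the proofs of Lemmas~\ref{lem:4k} and~\ref{lem:7k/2} --- is when the $k$ rows are concentrated in a short run of consecutive groups together with their triples, where heavy overlap between staircase columns and triple columns threatens the surplus. There I would induct on $l$, using that the intertwining of the last triples forces an extra row below whenever one tries to recycle a column, just as in those earlier lemmas. I expect this concentrated case to be the main obstacle, and it is what makes the three residue classes behave slightly differently, through the shape of $P_1$ and of the last triples.

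With $A$ built and the span bound verified, $\alpha(k,2k+1)\le(\text{number of }1\text{'s in }A)$ is immediate and yields the three claimed formulas. As the running theme of this subsection suggests, the bound is still not sharp --- the gadget can be nested inside itself --- which is ultimately why no finite list of linear formulas captures $\alpha(k,2k+1)$.
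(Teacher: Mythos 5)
Your proposal is the same construction the paper uses: the block matrix $\left[\begin{smallmatrix}P_1&P_2\\0&P_3\end{smallmatrix}\right]$ generalizing the $k=19,20,21$ examples, with the group count and the shape of $P_1$ dictated by $k \bmod 3$, verified through the criterion that every $k$ rows must span $k+2$ nonzero columns. In fact your outline of the column-span verification (and the remark that the needed inequality only has to hold when at least four non-identity rows are chosen) is more explicit than the paper's proof, which simply displays the generalized blocks and appeals to the patterns of the preceding lemmas.
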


\begin{proof}
The previous three examples are the models for the general cases.

{\it Case 1}: Suppose $k \equiv 0$ (mod 3), then the example for $k = 21$ can be generalized as follows: 
\[\begin{array}{cc}
P_1=\overbrace{\left[ \begin{array}{r}
\begin{array}{ccc} 1&&\\&\ddots&\\&&1 \end{array} 
\end{array}
\right]}^{k-4}, &
P_2=\overbrace{\left[\begin{array}{ccc|ccc|ccc|cc}
1&1&&&&&&&&&\\
&1&1&&&&&&&\\
&&&1&1&&&&&&\\
&&&&1&1&&&&&\\
&&&&&\ddots&&&&&\\
&&&&&&1&1&&&\\
&&&&&&&1&1&&\\
&&&&&&&&&1&1
\end{array}\right]}^{(k+6)/3},
\end{array}
\]
and $P_3$ has $(k+6)/3$ columns of three * which follow similar patterns as before:
\[\begin{array}{c}
 P_3 =\overbrace{ \left[ \begin{array}{c|c|c|c|c|c|c}
*&*&*&&&&\\
*&*&&&&&\\
*&&*&&&&\\
&*&&\ddots&*&&\\
&&*&&&*&\\
&&&&*&&*\\
&&&&&*&*\\
&&&&*&*&*
\end{array}
\right]}^{(k+6)/3}.
\end{array}
\]

{\it Case 2}:  Suppose $k \equiv 1$ (mod 3), then the example for $k=19$ can be generalized in a similar way: 
\[\begin{array}{cc}
P_1=\left[ \begin{array}{r|c}
\overbrace{\begin{array}{ccc} 1&&\\&\ddots&\\&&1 \end{array}}^{k-4} &\\ \hline
1&1
\end{array}
\right], &
P_2=\overbrace{\left[\begin{array}{ccc|ccc|ccc|cc}
1&1&&&&&&&&&\\
&1&1&&&&&&&\\
&&&1&1&&&&&&\\
&&&&1&1&&&&&\\
&&&&&\ddots&&&&&\\
&&&&&&1&1&&&\\
&&&&&&&1&1&&\\
&&&&&&&&&1&1\\
\end{array}\right]}^{(k+5)/3}, 
\end{array}
\]
and $P_3$ has $(k+5)/3$ columns following the same pattern as above.

{\it Case 3}: Suppose $k \equiv 2$ (mod 3), then the example for $k=20$ can be generalized as follows:
\[\begin{array}{cc}
P_1=\left[ \begin{array}{r|c}
\overbrace{\begin{array}{ccc} 1&&\\&\ddots&\\&&1 \end{array}}^{k-4} & \\ \hline
1&1\\
\end{array}
\right], &
P_2=\overbrace{\left[\begin{array}{ccc|ccc|ccc}
1&1&&&&&&&\\
&1&1&&&&&\\
&&&1&1&&&&\\
&&&&1&1&&&\\
&&&&&\ddots&&&\\
&&&&&&1&1&\\
&&&&&&&1&1\\
\end{array}\right]}^{(k+4)/3}, 
\end{array} 
\]
and $P_3$ has $(k+4)/3$ columns following the same pattern as above.
\end{proof}

{\bf Trial 3}: We observe that the coefficients of $k$ in the above formulas are 4, $\frac{7}{2}$, $\frac{10}{3}$, which are rational numbers in the form of $(3a+1)/3a$, where $a$ is an integer. As $a$ increases, (3a+1)/3a gradually decreases and it eventually converges to 3. 

\begin{thm}
Let $k$ be any positive integer and $n = 2k+1$. Given an $n \times n$ matrix $A$ with entries 0 or 1, suppose any $k \times k$ minor of $A$ contains at least one entry of 1, then given a positive integer $a$, the minimum number $\alpha(k, 2k+1)$ of 1's in $A$ is at most:
\begin{equation}
\frac{(3a+1)k+C}{a}, \nonumber
\end{equation} 
where the constant $C$ depends on the different congruence class of $k$ modulo $a$. In particular, $\alpha(k, 2k+1)$ converges to $3k$ (plus a constant) as $k$ goes to infinity. 
\end{thm}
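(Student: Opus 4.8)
The plan is to produce, for each fixed positive integer $a$, an explicit $(2k+1)\times(2k+1)$ matrix of $0$'s and $1$'s with exactly $\frac{(3a+1)k+C}{a}$ ones --- where $C=C(a,\ k\bmod a)$ is a bounded constant depending only on $a$ and the residue of $k$ modulo $a$, its class mod $a$ forced by integrality (namely $C\equiv -k\pmod{a}$) --- and with no zero $k\times k$ minor; the three matrices of Lemmas~\ref{lem:4k}, \ref{lem:7k/2}, \ref{lem:10k/3} are then the cases $a=1,2,3$. As in those lemmas the matrix is taken in the block form $\left[\begin{array}{cc}P_1 & P_2\\ 0 & P_3\end{array}\right]$, where $P_1$ is a near-identity block of size about $k$, supplying roughly $k$ \emph{light} rows with a single $1$; $P_2$ is a ``staircase'' block whose columns are cut into consecutive groups, contributing roughly $\frac{a-1}{a}k$ \emph{staircase} rows each carrying two adjacent $1$'s that shift to the right as one moves down; and $P_3$ installs beneath each group of $P_2$ roughly $\frac{1}{a}k$ \emph{heavy} rows of three $1$'s apiece, interlocked across neighbouring groups exactly as the $\ast$'s of $P_3$ are arranged in the $k=19,20,21$ examples. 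Counting $k+2\cdot\frac{a-1}{a}k+3\cdot\frac{1}{a}k=\frac{(3a+1)}{a}k$ ones, with a bounded correction $\frac{C}{a}$ absorbed into the first and last (boundary) groups where the staircase begins and ends, yields the asserted upper bound.

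Next I would verify the defining property. As already observed in the proofs of Lemmas~\ref{lem:4k} and~\ref{lem:7k/2}, for $n=2k+1$ a matrix has no zero $k\times k$ minor precisely when the $1$'s in any $k$ of its rows occupy at least $k+2$ distinct columns; and since every light row of $P_1$ contributes one fresh column disjoint from all columns met by $P_2$ and $P_3$, it suffices to show that any $m$ rows drawn from the staircase-and-heavy part span at least $m+2$ columns. I would run the same flavour of case analysis as before, organised by (i) how many of the $m$ chosen rows are staircase rows and how many are heavy, (ii) whether the first and/or last group of $P_2$ is met (these boundary groups behave slightly differently), and (iii) --- the crucial device --- tracking the leftmost and rightmost occupied columns along a maximal run of consecutively chosen heavy rows, using that the three $1$'s of a heavy row are spread by fixed gaps and that the interlocking of $P_3$ is designed so a run of $\ell$ heavy rows still forces $2\ell+1$ (rather than merely $\ell$) columns. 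This is exactly the mechanism that makes the $m+2$ bound survive, generalising the ``$2l+1$ columns'' estimates of Lemma~\ref{lem:7k/2}.

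The main obstacle is this last verification: even for $a=1,2,3$ it already fans out into six-or-so cases, and for general $a$ the interaction among light, staircase, and heavy rows together with the two boundary groups multiplies the bookkeeping, so the real task is to find a \emph{uniform} invariant --- something like ``the number of columns spanned by the first $i$ chosen rows is at least $i+1$ plus the number of distinct $P_2$-groups they have met,'' maintained by induction on $i$ --- that handles all $a$ and all residues of $k$ at once, rather than being re-derived case by case. Once the construction is checked, the concluding remark is immediate: for every $a$ one has $\alpha(k,2k+1)\le \frac{(3a+1)k+C}{a}=3k+\frac{k}{a}+O_a(1)$, and as $a\to\infty$ this family of upper bounds decreases toward ``$3k$ plus a constant,'' so the growth rate of $\alpha(k,2k+1)$ has leading coefficient at most $3$, which (as the tabulated values sit just above $3k$) is what is meant by saying $\alpha(k,2k+1)$ converges to $3k$ up to lower-order terms.
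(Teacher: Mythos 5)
Your proposal follows essentially the same route as the paper: the same block decomposition $\left[\begin{smallmatrix}P_1 & P_2\\ 0 & P_3\end{smallmatrix}\right]$ generalizing the $a=1,2,3$ constructions of Lemmas~\ref{lem:4k}, \ref{lem:7k/2}, \ref{lem:10k/3}, the same count $k+2\cdot\frac{a-1}{a}k+3\cdot\frac{1}{a}k=\frac{(3a+1)k}{a}$ up to a bounded remainder, and the same reduction of the no-zero-$k\times k$-minor property to the ``any $m$ chosen rows span $m+2$ columns'' criterion. If anything you are more candid than the paper, which simply declares the verification ``analogous as before,'' whereas you correctly identify that a uniform invariant handling all $a$ at once is the real remaining work.
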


\begin{proof}
We observe that when $a = 2$, $(\begin{array}{cc} 1 & 1 \end{array})$ appears in groups, each of which lies over three 1's (see $L$ and $L'$ in Lemma~\ref{lem:7k/2}); when $a = 3$, $\left(\begin{array}{ccc}1 & 1& \\ & 1 & 1\end{array} \right)$ appears in groups each also lying over three 1's (see $P_2$ and $P_3$ in Lemma~\ref{lem:10k/3}); so for an arbitrary $a$, let's consider the following $(a-1) \times a$ matrix $Q_a$:
\[Q_a=\left[ \begin{array}{cccccc}
1&1&&&&\\
&1&1&&&\\
&&1&1&&\\
&&&\ddots&\\
&&&&1&1
\end{array}
\right].
\]   
Then the  block matrix $P_2$ in Lemma~\ref{lem:10k/3} becomes:
\[P_2=\left[\begin{array}{r|c} \overbrace{
\begin{array}{cccc}
Q_a&&&\\
&Q_a&&\\
&&\ddots&\\
&&&Q_a\\
\end{array}}^{m} & \\ \hline
& R 
\end{array}
\right]
\]
in which $m$ is an integer that is approximately $(k+2a-1)/a$, and $R$ is a remainder, depending on the different congruence classes of $k$ (mod $a$). Likewise, $P_1$ and $P_3$ become 
\[\begin{array}{cc}
P_1=\left[ \begin{array}{r|c}
\overbrace{\begin{array}{ccc} 1&&\\&\ddots&\\&&1 \end{array}}^{k-2a+2} & \\ \hline
&R\\
\end{array}
\right], &
 P_3 =\overbrace{ \left[ \begin{array}{c|c|c|c|c|c|c}
*&*&*&&&&\\
*&*&&&&&\\
*&&*&&&&\\
&*&&\ddots&*&&\\
&&*&&&*&\\
&&&&*&&*\\
&&&&&*&*\\
&&&&*&*&*
\end{array}
\right]}^{m}.
\end{array}
\]
The total number of 1's in $P_1$, $P_2$, and $P_3$ is approximately
\begin{equation}
k-2a+2 + 2(a-1)m+ 3m = \frac{(3a+1)k +C}{a}, \nonumber
\end{equation}
for some constant $C$. The rest of the proof is analogous as before. In particular, when as $k$ becomes larger and larger, $\alpha(k, 2k+1)$ converges to $3k$ plus a constant. 
\end{proof}

\section{Conclusion}
In this paper we study a problem of Erd\"{o}s concerning lattice cubes. We want to find the maximal number of vertices one can select from an $N \times N \times N$ lattice cube so that no eight corners of a rectangular box inside the grid are selected simultaneously. Efforts have been made by mathematicians such as Erd\"{o}s and Katz to estimate how big this set might be. A sharp upper bound of $N^{\frac{11}{4}}$ has been conjectures, but no example that large has been found so far to confirm this conjecture. Katz and etc.\ have found an example that is $O(N^{8/3})$.

This paper starts with investigating small examples, such as $N = 2, 3, 4$, for which the maximum numbers are 7, 22, and 47, respectively. The first two numbers are realized by unique configurations up to permutational, rotation, and reflectional symmetries. On contrast, the third number are obtained through two distinct configurations, which are not equivalent to each other. So a maximal set of vertices does not have to be unique. Furthermore, we find that the method of exhaustion quickly meets its limitation when $N=5$, so we look for another way to approach this question. One way is to study an equivalent two-dimensional problem in terms of matrices and hope that a pattern could be discovered and generalized to its three-dimensional equivalence. 

Since an $n \times n$ grid is also an $n \times n$ matrix, we can rephrase the question as: what is the minimum number of 1's one can put in an $n \times n$ matrix with entries 0 or 1, such that every $2 \times2$ minor contains at least one entry of 1? Moreover, we ask a more general question: given $1 \leq k \leq n$, what is the minimum number $\alpha(k,n)$ of 1's such that any $k \times k$ minor contains at least one entry of 1? First, the answer is easy for $n/2 < k \leq n$: the minimum number is
\begin{equation}
 \alpha(k, n) = 2(n-k)+1, \nonumber
\end{equation}
and it corresponds to a unique configuration. Next, the pattern starts getting more difficult at the ``middle point" $k = n/2$ or $(n-1)/2$, depending on the polarity of $n$. The easier case is when $n$ is even and $k = n/2$, the minimum number is found to be
\begin{equation}
\alpha(k, 2k) = 3k+1, \nonumber
\end{equation}
and uniqueness still holds. The harder case is when $n$ is odd and $k= (n-1)/2$. Uniqueness no longer holds. Three trials are attempted in order to write down a precise formula for $\alpha(k, 2k+1)$. The first trial results in the following upper bound: 
\begin{equation}
\alpha(k, 2k+1) \leq 4k+5. \nonumber
\end{equation}
The second trial, which hopes to fix the first one, results in the following upper bound:
\begin{equation}
\alpha(k, 2k+1) \leq \left\{\begin{array}{cc}
\frac{(7k+11)}{2} & \mbox{if $k$ is odd} \\
\frac{(7k+12)}{2} & \mbox{if $k$ is even.}
\end{array}
\right. \nonumber
\end{equation}
Moreover, the third trial, which hopes to fix the second one, again results only in an upper bound as follows:
\begin{equation}
\alpha(k, 2k+1) \leq \left\{ \begin{array}{cc}
		\frac{10k+24}{3} & \mbox{if $k \equiv 0$ (mod 3)}\\
		\frac{10k+23}{3} & \mbox{if $k \equiv 1$ (mod 3)}\\
		\frac{10k+25}{3} & \mbox{if $k \equiv 2$ (mod 3).}
	\end{array}
\right. \nonumber
\end{equation}
In the end, we find that there is an asymptotic upper bound for the minimum number: given any positive integer $a$, 
\begin{equation}
\alpha(k, 2k+1) \leq \frac{(3a+1)k+C}{a}, \nonumber
\end{equation}
where the constant $C$ depends on the congruence class of $k$ modulo $a$. So $\alpha(k, 2k+1)$ converges to $3k$ (plus a constant) as $k$ goes to infinity. 

Although there is no explicit formula for $\alpha(k, 2k+1)$, it sheds light on how the next minimum numbers $\alpha(k, 2k+2)$, $\alpha(k, 2k+3)$ etc.\ might change. Furthermore, it tells us that there is no surprise that the previous mathematicians could only give a sharp upper bound for this two-dimensional case, which is 
\begin{equation}
n^2 - \alpha(2, n) = O(n^{\frac{3}{2}}). \nonumber
\end{equation}
Finally, we will continue generalize our method to the three dimensional case and hope that the patterns we've discovered so far would be used to find an example that realizes the proposed sharp upper bound of $O(N^{11/4})$. Moreover, we ask a more general question of what is the minimum number of 1's one can put in an $n \times n \times n$ matrix with entries 0 and 1, such that every $k \times k \times k$ minor contains at least one entry of 1? What about to any arbitrary dimension?

\bibliographystyle{aomplain}

\end{document}